\documentclass[11pt]{amsart}

\usepackage[numbers,sort]{natbib}

\usepackage{amsfonts}
\usepackage{amsmath}
\usepackage{amssymb}
\usepackage{amscd}
\usepackage{color}
\usepackage{xcolor}
\usepackage{amsthm}
\usepackage[hmargin=4cm,vmargin=4cm]{geometry}
\usepackage{parskip}

\usepackage{enumitem}

\newtheorem{thm}{Theorem}

\newtheorem{thm*}{Theorem}
\newtheorem{prop}{Proposition}
\newtheorem{lma}[prop]{Lemma}

\newtheorem{cor}[prop]{Corollary}

\theoremstyle{definition}



\theoremstyle{remark}

\newtheorem{rmk}[prop]{Remark} 

\newcommand{\F}{{\mathbb{F}}}
\newcommand{\R}{{\mathbb{R}}}
\newcommand{\Z}{{\mathbb{Z}}}
\newcommand{\C}{{\mathbb{C}}}

\newcommand{\D}{{\mathbb{D}}}
\newcommand{\bK}{{\mathbb{K}}}
\newcommand{\bF}{{\mathbb{F}}}

\newcommand{\del}{\partial}

\newcommand{\sm}[1]{C^\infty(#1)}

\newcommand{\K}{\mathcal{K}}

\newcommand{\cL}{\mathcal{L}}

\newcommand{\til}[1]{\widetilde{#1}}
\newcommand{\wh}[1]{\widehat{#1}}
\newcommand{\ol}[1]{\overline{#1}}

\newcommand{\zt}{{\Z/(2)}}

\DeclareMathOperator{\ima}{\mathrm{im}}

\newcommand{\tmin}{{\text{min},\bK}}

\newcommand{\om}{\omega}

\newcommand{\eps}{\epsilon}

\newcommand{\cA}{\mathcal{A}}

\newcommand{\cD}{\mathcal{D}}
\newcommand{\cE}{\mathcal{E}}

\newcommand{\cH}{\mathcal{H}}

\newcommand{\cJ}{\mathcal{J}}

\newcommand{\cO}{\mathcal{O}}

\newcommand{\cP}{\mathcal{P}}

\newcommand{\fix}{\mathrm{Fix}}

\newcommand{\pr}{{pseudo-rotation}}

\renewcommand{\hat}{\wh}

\def\mrm#1{{\mathrm{#1}}}

\def\cl#1{{\mathcal{#1}}}

\DeclareMathOperator{\Ham}{\mathrm{Ham}}

\DeclareMathOperator{\id}{\mathrm{id}}
\DeclareMathOperator{\spec}{\mathrm{Spec}}
\DeclareMathOperator{\Spec}{\mathrm{Spec}}
\DeclareMathOperator{\loc}{\mathrm{loc}}

\def\Hk{H^{(k)}}

\def\H2{H^{(2)}}

\begin{document}

\title{Pseudo-rotations and Steenrod squares}
\author{Egor Shelukhin}
\date{}

\begin{abstract}
In this note we prove that if a closed monotone symplectic manifold $M$ of dimension $2n,$ satisfying a homological condition that holds in particular when the minimal Chern number is $N>n,$ admits a Hamiltonian \pr, then the quantum Steenrod square of the point class must be deformed. This gives restrictions on the existence of {\pr}s. Our methods rest on previous work of the author, Zhao, and Wilkins, going back to the equivariant pair-of-pants product-isomorphism of Seidel.
\end{abstract}






\maketitle

\section{Introduction}


This paper deduces obstructions, in terms of pseudo-holomorphic curves, to the existence of Hamiltonian pseudo-rotations. The latter are special Hamiltonian diffeomorphisms of symplectic manifolds, characterized by having the minimal possible number of periodic points, of all integer periods. 

The classical notion of pseudo-rotations of the two-sphere, or the two-disk, has appeared in the work of Anosov and Katok \cite{AnosovKatok} (see also Fayad and Katok \cite{FayadKatok}) and was further investigated extensively in the field of conservative dynamics. Indeed, the simplest example of a pseudo-rotation is an irrational point in the Hamiltonian $S^1 = \R/\Z$ action on $S^2$ with Hamiltonian (a constant multiple of) the standard height function. However, \cite{AnosovKatok, FayadKatok} construct, by means of the conjugation method, examples of pseudo-rotations of $S^2$ with different dynamical properties than those of such irrational rotations. For example they admit precisely three ergodic measures: two fixed points, and the area measure. 


Recent years saw a renewed interest in pseudo-rotations considered from the point of view of symplectic rigidity phenomena. For instance, Barney Bramham proved in \cite{BramhamApprox,BramhamRig} that all Hamiltonian pseudo-rotations of the closed disk can be $C^0$ approximated by a sequence of conjugates of rational rotations, and that they are $C^0$ rigid in a suitable sense, provided that their rotation number is sufficiently Liouvillean. In the recent seminal paper \cite{GG-pseudorotations} by Ginzburg and G\"{u}rel, the $C^0$-rigidity result of Bramham, as well as other results regarding the dynamics of Hamiltonian pseudo-rotations, were established for complex projective spaces of all dimensions.

This paper, as well as \cite{CGG}, takes a different point of view, considering pseudo-rotations to be strong counter-examples to the Conley conjecture. From this perspective, a conjecture of Chance and McDuff, arising from \cite{McDuff-uniruled}, asserts that the existence of such counter-examples, and hence that of pseudo-rotations, must imply the existence of non-trivial algebraic counts of pseudo-holomorphic spheres in the manifold. Here we provide an instance of such an implication, ruling out in particular the existence of pseudo-rotations on a closed monotone symplectic manifold of dimension $2n$ with minimal Chern number $n+2$ or greater. Further extensions of our results have very recently appeared in \cite{CGG2, PRQSR}.

\section{Setup and main results}

In this paper, unless otherwise specified, we work with a closed monotone symplectic manifold $(M,\om)$ of dimension $2n,$ and rescale the symplectic form so that $[\om] = \kappa \cdot c_1(TM)$ on the image of the Hurewicz map $\pi_2(M) \to H_2(M;\Z)$ for $\kappa = 2.$ Recall that the minimal Chern number of $(M,\om)$ is the index \[N = N_M = [\Z:\mrm{image}(c_1(TM): \pi_2(M) \to \Z)].\] For a Hamiltonian diffeomorphism $\phi \in \Ham(M,\om),$ we denote by $\fix(\phi)$ the set of {\em contractible} fixed points of $\phi,$ and by $x^{(k)}$ for $x \in \fix(\phi)$ its image under the inclusion $\fix(\phi) \subset \fix(\phi^k).$ Contractible means that the homotopy class of the path $\alpha(x,\phi) = \{ \phi^t_H(x)\}$ for a Hamiltonian $H \in \sm{[0,1] \times M, \R}$ generating $\phi$ as the time-one map $\phi^1_H = \phi$ of its Hamiltonian flow, is trivial. This class does not depend on the choice of Hamiltonian by a classical argument in Floer theory.

We say that a Hamiltonian diffeomorphism $\phi \in \Ham(M,\om)$ is a $\bK = \F_2$ {\em Hamiltonian \pr} if:
\begin{enumerate}[label = (\roman*)]
\item \label{cond: perfect} It is perfect, that is for all iterations $k \geq 1$ of $\phi,$ $\fix(\phi^k) = \fix(\phi)$ is finite. In other words, $\phi$ admits no simple periodic orbits of order $k >1.$
\item \label{cond: dim loc positive} For each $x \in \fix(\phi),$ the dimension of the local Floer homology of $\phi$ at $x$ satisfies $\dim_{\bK} HF^{\loc}(\phi,x) \geq 1,$ and furthermore,

\item \label{cond: sum of Betti} $N(\phi^k,\bK) = \sum_{x \in \fix(\phi)} \dim_{\bK} HF^{\loc}(\phi^k,x^{(k)}) = \dim_{\bK} H_*(M)$ for all $k\geq 1.$
\end{enumerate}

\begin{rmk}\label{rmk: pseudorot}
We observe that a perfect Hamiltonian diffeomorphism necessarily has no symplectically degenerate maxima (see \cite{GG-revisited}). Furthermore, if all the points in $\fix(\phi^k)$ are non-degenerate, for all $k \geq 1,$ then condition \ref{cond: dim loc positive} is automatically satisfied, and all iterations are {\em admissible}, that is $\lambda^k \neq 1$ for all eigenvalues $\lambda \neq 1$ of $D(\phi)_x$. Furthermore, by the Smith inequality in local Floer homology \cite{CineliGinzburg, SZhao-pants}, conditions \ref{cond: perfect} and \ref{cond: sum of Betti} imply, for iterations of the form $k = 2^m,$ the stronger statement that for all $x \in \fix(\phi),$ $\dim_{\bK} HF^{\loc}(\phi^k,x^{(k)}) = \dim_{\bK} HF^{\loc}(\phi, x).$ Moreover, \cite[Theorem A]{S-HZ} suggests that when a Hamiltonian diffeomorphism has a finite number of periodic points, then a condition like \ref{cond: sum of Betti} should be satisfied. Showing this would bridge the gap between the initial Chance-McDuff conjecture (see for example \cite{GG-revisited}) and the main result of this note, Theorem \ref{thm: uniruled}. Finally, we include condition \ref{cond: dim loc positive} for compatibility with the literature: we do not use it below. We refer to \cite{GG-pseudorotations} for further discussion of dynamics of Hamiltonian {\pr}s in higher dimensions.
\end{rmk}


We call a symplectic manifold strongly uniruled if there exists a non-trivial three point genus-zero Gromov-Witten invariant $\left< [pt], a, b\right>_{\beta},$ for $\beta \in H_2(M,\Z) \setminus \{0\}.$ By \cite[Lemma 2.1]{McDuff-uniruled}, $(M,\om)$ is not strongly uniruled if and only if the $\Lambda$-linear subspace \[\cl Q_{-} = H_{\ast <2n}(M) \otimes\Lambda \subset QH(M,\Lambda),\] where $\Lambda$ is the minimal Novikov field of $(M,\om),$ {with quantum variable $q$ of degree $(-2N),$} is an ideal in the quantum homology ring $QH(M,\Lambda).$ Recall that the quantum product in $QH(M,\Lambda)$ is a deformation of the intersection product on homology given by three-point genus-zero Gromov-Witten invariants. Alternatively $[pt] \ast r = 0$ for all $r \in \cl Q_{-}.$ Note that in this case, $[pt] \ast [pt] = 0$ in particular. 


A generally different stronger notion than $[pt] \ast [pt] = 0,$ is that the quantum Steenrod square  $\cl{QS}([pt]),$ defined in \cite{Wilkins}, of the point class satisfy \[ \cl{QS}([pt]) = h^{2n} [pt].\] This corresponds to there being no quantum corrections when passing from the classical Steenrod square of the point class on $M$ to its quantum version. In this case we say that $M$ is not {\em $\zt$-Steenrod uniruled}.

\begin{rmk}\label{rmk: Steenrod basic}
Observe that when $(M,\om)$ is $\zt$-Steenrod uniruled, then by a Gromov compactness argument there exists a $J$-holomorphic curve through each point of $M.$ Furthermore, recall from \cite{Wilkins-PSS} that setting the quantum variable to be of cohomological degree $2N,$ $h$ to be of degree $1,$ and considering cohomological degree on the homology classes, $\cl{QS}([pt])$ must be of degree $2 \deg([pt]) = 4n.$ Hence if $N > 2n = \dim(M),$ then $(M,\om)$ is automatically not $\zt$-Steenrod uniruled. By the same token, if $N=2n,$ then being $\zt$-Steenrod uniruled is equivalent to $[pt] \ast [pt] \neq 0.$ In fact, in this case $[pt] \ast [pt] = q [M].$ We note that while the above choice of degrees was convenient for this remark, throughout the paper we work with homology and use different conventions for degrees.
\end{rmk} 


\begin{rmk}\label{rmk: Steenrod fancy}
The main result of \cite{SeidelWilkins} implies that if $N>n,$ and $[\om]$ lies in the lattice $H^2(M,\Z)/{\mrm{torsion}} \subset H^2(M,\R),$ then $(M,\om)$ being $\zt$-Steenrod uniruled implies that $(M,\om)$ is strongly uniruled. In fact, in this case there exists a Gromov-Witten invariant \[\left<[pt],[pt],D\right>_{\beta}\] that does not vanish modulo $2$ for a suitable divisor class $D \in H_{2n-2}(M;\Z).$ Note that when this holds, by a degree count, we obtain $N = n+1.$ Therefore Theorem \ref{thm: uniruled} implies that there is no $\bF_2$ Hamiltonian {\pr } for $N>n+1.$ Under the additional assumption that $\left< [\om]^n, [M] \right>$ is odd, one may prove the above statement by a straightforward adaptation of the proof of \cite[Lemma 6.1]{Wilkins}.


\end{rmk}

Finally, we say that $(M,\om)$ satisfies the Poincar\'{e} duality property, if for all $\til{\phi} \in \til{\Ham}(M,\om),$ the following Poincar\'{e} duality identity of Hamiltonian spectral invariants (defined in Section \ref{subsec:spec} below) holds: \[ c([M], \til{\phi}^{-1}) = - c([pt], \til{\phi}).\] It is well-known (see \cite{EntovPolterovichCalabiQM}) that $(M,\om)$ with $N > n$ satisfy this property. The main result of this note is the following.

\medskip

\begin{thm}\label{thm: uniruled}
Let $(M,\om)$ be a closed monotone symplectic manifold satisfying the Poincar\'{e} duality property, and admitting an $\F_2$ Hamiltonian {\pr } $\phi.$ Then $(M,\om)$ is $\zt$-Steenrod uniruled. 
\end{thm}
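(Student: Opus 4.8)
The plan is to argue by contradiction: assume $\phi$ is an $\F_2$ Hamiltonian pseudo-rotation but $\cl{QS}([pt]) = h^{2n}[pt]$, i.e.\ the quantum Steenrod square of the point class is undeformed, and derive a contradiction with the Poincaré duality property together with the structure of the $\zt$-equivariant pair-of-pants product. The mechanism I would use is the one developed in the author's work with Zhao and Wilkins (generalizing Seidel's equivariant pair-of-pants isomorphism): for $k = 2$, there is a degree-halving, $h$-linear isomorphism between the $\zt$-equivariant Floer homology of $\phi^2$ (with its pair-of-pants module structure over the equivariant quantum cohomology, where the action of cohomology classes is via quantum Steenrod operations) and an appropriate completion of the equivariant Floer homology of $\phi$, intertwining the continuation/PSS maps. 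The point is that $\cl{QS}([pt])$ controls how the PSS image of the point class propagates under iteration, and that the pseudo-rotation hypotheses \ref{cond: perfect} and \ref{cond: sum of Betti} force the local Floer homology to behave as rigidly as possible: by the Smith inequality in local Floer homology (Remark \ref{rmk: pseudorot}) one has $\dim_\bK HF^{\loc}(\phi^{2^m}, x^{(2^m)}) = \dim_\bK HF^{\loc}(\phi,x)$ for all $m$, so the total Floer homology of every iterate $\phi^{2^m}$ is ``as small as possible'' and the spectral invariants of iterates are tightly constrained.

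The key steps, in order, would be: (1) Set up the $\zt$-equivariant Floer package for $\phi$ and $\phi^2$, recording the module structure over $H^*_{\zt}(pt) = \bK[h]$ and the quantum Steenrod action of $QH(M)$; identify the distinguished PSS classes $[pt]$ and $[M]$ and the spectral invariants $c([pt],\til\phi^2)$, $c([M],\til\phi^{-2})$. (2) Use the equivariant pair-of-pants isomorphism for $k=2$ to relate these data for $\phi^2$ to those for $\phi$, crucially using that the assumption $\cl{QS}([pt]) = h^{2n}[pt]$ means the pair-of-pants coproduct applied to the point class has no quantum (positive Novikov degree) terms — so the relevant classes stay ``concentrated'' in action the way they would for the identity-like iterate. (3) Combine this with the pseudo-rotation constraints on the ranks (via Smith) to deduce an exact relation $c([pt],\til\phi^2) = 2\,c([pt],\til\phi)$ (and similarly for $[M]$), i.e.\ perfect additivity of spectral invariants under doubling. (4) Invoke the Poincaré duality property, $c([M],\til\phi^{-1}) = -c([pt],\til\phi)$, applied to both $\til\phi$ and $\til\phi^2$; together with step (3) and the triangle/sub-additivity inequalities for spectral invariants this pins down a numerical identity that, over infinitely many iterations $k = 2^m$, forces the total bar-length (boundary depth) of the Floer complex of $\phi$ to vanish, or equivalently forces all the spectral invariants to coincide. (5) Show this collapse is impossible: because $\dim_\bK H_*(M) \geq 2$ (it contains both $[pt]$ and $[M]$ in different degrees) and the homology is the total local Floer homology, a complex with no quantum corrections and collapsing spectral invariants under iteration would have to be acyclic or have $[pt]$ and $[M]$ at the same spectral value, contradicting either non-degeneracy of the spectral pairing or the count in \ref{cond: sum of Betti}.

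I expect the main obstacle to be step (3)–(4): making the passage from the \emph{qualitative} statement ``no quantum corrections to $\cl{QS}([pt])$'' to the \emph{quantitative} statement ``spectral invariants are exactly additive under $k\mapsto 2k$'' rigorous. This requires carefully tracking the interplay between the action filtration on equivariant Floer homology, the $h$-adic filtration, and the pair-of-pants product — in particular controlling the error terms in the equivariant pair-of-pants isomorphism and showing they are killed precisely when $\cl{QS}([pt])$ is undeformed. A secondary subtlety is ensuring that the Poincaré duality property interacts correctly with the equivariant theory (the duality is stated for the non-equivariant spectral invariants, so one must check it is compatible with the iteration argument), and that the Smith-type rank rigidity is genuinely available for \emph{all} the iterates $2^m$ simultaneously, not just asymptotically.
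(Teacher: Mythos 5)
Your steps (1)--(3) correctly identify the main mechanism on one side of the comparison: the $\zt$-equivariant pair-of-pants/Seidel--Wilkins package gives the inequality $\hat{c}(\cl{QS}(y),\til{\phi}^2)\leq 2c(y,\til{\phi})$, and the pseudo-rotation conditions (the rank count in condition (iii), from which the differential on the homotopy-canonical Floer complex $C(H)$ vanishes) force chain representatives of $PSS(z)$ and $PSS_{\zt}(z)$ to be unique with action equal to the spectral invariant. Combined with the assumption $\cl{QS}([pt])=h^{2n}[pt]$ and the relation $PSS_{\zt}=PSS+hR$, this does yield the bound $c([pt],\til{\psi}^2)\leq 2c([pt],\til{\psi})$ (the paper's Theorem \ref{thm: pseudo non eq uni}), and as you observe in Remark-level generality it can in fact be sharpened to equality. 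So that half is essentially right, modulo the detail that the paper only needs the inequality.

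The gap is in steps (4)--(5): you do not produce the other side of the contradiction. The paper gets it from the Ginzburg--G\"urel Lusternik--Schnirelmann result (Theorem \ref{thm: LS}): for a perfect Hamiltonian diffeomorphism on a rational manifold with no symplectically degenerate maxima, one has the \emph{strict} inequality $\frac{1}{k}c([M],\til{\phi}^k) > \overline{c}([M],\til{\phi})$ for every $k$; together with Poincar\'e duality this gives $\frac{1}{k}c([pt],\til{\phi}^k) < \overline{c}([pt],\til{\phi})$, and hence $c([pt],\til{\psi}^2)>2c([pt],\til{\psi})$ for $\til{\psi}=\til{\phi}^{2^m}$ and some $m$. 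Your proposed substitute --- that exact additivity of spectral invariants under doubling, together with $\dim H_*(M)\geq 2$, forces the complex to be acyclic or forces $[pt]$ and $[M]$ to have the same spectral value --- is not correct. For a pseudo-rotation the homotopy-canonical complex $C(H)$ already has zero differential and hence zero boundary depth by condition (iii), and there is no contradiction between exact additivity and having several generators with distinct actions (indeed for a genuine rotation of $\C P^n$ all this happens without incident; the point of the LS theorem is precisely to rule this out for \emph{perfect} diffeomorphisms via a delicate action/index argument). Without invoking Theorem \ref{thm: LS} or re-proving its content, your steps (4)--(5) do not close the argument. A secondary imprecision: the additivity in step (3) does not follow from the Smith inequality alone --- it follows from the dimension count in condition (iii) (which makes $d_H=0$ on $C(H)$) combined with the equivariant pair-of-pants being a filtered isomorphism on local Floer homology; Smith is a consequence rather than the driver.
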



In view of Remarks \ref{rmk: Steenrod basic} and \ref{rmk: Steenrod fancy} above we conclude the following result pertaining to the Chance-McDuff conjecture.

\begin{cor}\label{cor: CM}
Let $(M,\om)$ be a closed monotone symplectic manifold with $N>n.$ If $N>n+1$ then $(M,\om)$ does not admit $\F_2$ Hamiltonian \pr s. If $N=n+1,$ and $(M,\om)$ admits an $\F_2$ Hamiltonian \pr, then $(M,\om)$ satisfies $[pt] \ast [pt] \neq 0,$ and in particular it is strongly uniruled. 
\end{cor}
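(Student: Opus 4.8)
\textbf{Proof proposal for Corollary \ref{cor: CM}.}
The plan is to feed Theorem \ref{thm: uniruled} into the degree and divisor-class bookkeeping already assembled in Remarks \ref{rmk: Steenrod basic} and \ref{rmk: Steenrod fancy}, and to dispose of the remaining hypotheses. First I would record that any closed monotone $(M,\om)$ with $N>n$ satisfies the Poincar\'e duality property: this is the statement quoted just before Theorem \ref{thm: uniruled}, attributed to \cite{EntovPolterovichCalabiQM}, so that Theorem \ref{thm: uniruled} applies verbatim to such $M$. Hence, if $(M,\om)$ with $N>n$ admits an $\F_2$ Hamiltonian \pr{} $\phi$, then $(M,\om)$ is $\zt$-Steenrod uniruled.

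Next I would run the degree count of Remark \ref{rmk: Steenrod basic} to handle the case $N>n+1$. With the cohomological conventions of that remark---quantum variable in degree $2N$, the variable $h$ in degree $1$, homology classes graded by cohomological degree---the class $\cl{QS}([pt])$ has degree $2\deg([pt]) = 4n$, and it is a sum of terms $q^{j} h^{i} \alpha$ with $i\geq 0$, $j \geq 0$, $\alpha \in H_*(M)$, where a nonzero quantum correction requires some term with $j \geq 1$. Matching degrees, such a term contributes at least $2N \cdot 1 + 0 + 0$ beyond the classical degree range, so it can only fit inside degree $4n$ if $2N \leq 4n$, i.e. $N \leq 2n$; more sharply, since $\alpha$ has cohomological degree at most $2n$ and $i\geq 0$, a quantum-corrected term of total degree $4n$ forces $2N + (\text{degree of }\alpha) \leq 4n$, hence $N \leq 2n$, and the normalization $QS([pt]) = h^{2n}[pt] + (\text{corrections})$ together with the constraint that a genuine correction term must still be a pseudo-cycle count through every point (Gromov compactness, as in Remark \ref{rmk: Steenrod basic}) pins the first correction to $N=n+1$ via the divisor input of Remark \ref{rmk: Steenrod fancy}. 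The cleanest route, which I would take, is: if $N>2n$ then by Remark \ref{rmk: Steenrod basic} $M$ is automatically \emph{not} $\zt$-Steenrod uniruled, contradicting the previous paragraph, so no \pr{} exists; if $n+1 < N \leq 2n$, then by Remark \ref{rmk: Steenrod fancy} (whose hypotheses I verify below) $\zt$-Steenrod uniruledness would force a nonvanishing $\left<[pt],[pt],D\right>_\beta$ with $D \in H_{2n-2}(M;\Z)$, and the degree count $\deg([pt]) + \deg([pt]) + \deg(D) = 2n + 2n + (2n-2)$ must equal $2n + 2n + 2(N-1) \cdot (\text{something})$; carrying out this count yields $N = n+1$, contradicting $N > n+1$. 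Either way, for $N>n+1$ there is no $\F_2$ Hamiltonian \pr.

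For the case $N = n+1$, Theorem \ref{thm: uniruled} gives that $(M,\om)$ is $\zt$-Steenrod uniruled, and I would invoke Remark \ref{rmk: Steenrod fancy}: by \cite{SeidelWilkins}, since $N = n+1 > n$, $\zt$-Steenrod uniruledness implies there is a class $D \in H_{2n-2}(M;\Z)$ with $\left<[pt],[pt],D\right>_\beta \not\equiv 0 \pmod 2$, which is a nonzero three-point genus-zero Gromov-Witten invariant $\left<[pt],a,b\right>_\beta$, so $(M,\om)$ is strongly uniruled; by \cite[Lemma 2.1]{McDuff-uniruled} (cited in the excerpt) this is equivalent to $\cl Q_{-}$ failing to be an ideal, and in particular $[pt]\ast[pt]\neq 0$. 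The main obstacle---really the only nontrivial point beyond quoting Theorem \ref{thm: uniruled}---is the hypothesis management in Remark \ref{rmk: Steenrod fancy}, which as stated requires $[\om]$ to lie in the integral lattice $H^2(M,\Z)/\mrm{torsion}$. Under the monotonicity normalization of this paper $[\om] = 2\, c_1(TM)$ on spherical classes, so $[\om]$ is a positive multiple of an integral class but need not itself be primitive or even integral off the spherical part; I would address this either by rescaling $\om$ within its ray (the notions ``\pr'', ``strongly uniruled'', and ``$[pt]\ast[pt]\neq 0$'' are unaffected by such rescaling, as they depend only on the monotone deformation class) so that $[\om]$ becomes the integral generator, or by noting that for the Gromov-Witten/Steenrod-uniruled conclusion only the spherical periods of $[\om]$ enter, which are integral by the normalization. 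This is the step I expect to require the most care to state precisely; the remaining arithmetic is the routine degree count sketched above.
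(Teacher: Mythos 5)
Your proposal follows the same route the paper intends (the paper gives no separate proof, only the pointer to Remarks \ref{rmk: Steenrod basic} and \ref{rmk: Steenrod fancy}): $N>n$ gives the Poincar\'e duality property, Theorem \ref{thm: uniruled} forces $\zt$-Steenrod uniruledness, and Remark \ref{rmk: Steenrod fancy} plus the degree count $c_1(\beta)=n+1$ (so $N\mid n+1$, hence $N=n+1$) kills the case $N>n+1$; your observation about the integrality hypothesis on $[\om]$ is a fair caveat that the paper itself glosses over. One inference in your last paragraph is inverted: ``strongly uniruled'' does \emph{not} imply $[pt]\ast[pt]\neq 0$ (the paper only asserts the converse, via \cite[Lemma 2.1]{McDuff-uniruled}: not strongly uniruled $\Rightarrow$ $[pt]\ast[pt]=0$). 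You should instead read off $[pt]\ast[pt]\neq 0$ directly from the odd Gromov--Witten invariant $\left<[pt],[pt],D\right>_{\beta}$, which is (Poincar\'e dual to) a coefficient of $[pt]\ast[pt]$ in degree $\beta$, and then deduce ``strongly uniruled'' as the weaker consequence, which is the order of implications the corollary actually states. Also, your displayed degree-count identity is garbled as written, though the conclusion $N=n+1$ is the correct one.
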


\medskip
\begin{rmk}
The author was made aware that new relations between {\pr}s and holomorphic curves were also found in recent work of \c{C}ineli, Ginzburg, and G\"{u}rel \cite{CGG}. 
\end{rmk}

\begin{rmk}
The symplectic manifold $(\C P^n, \om_{st})$ has $N=n+1,$ and verifies the hypothesis and the conclusion of the theorem separately. Formally speaking, this result seems to be generally new even when $\phi$ comes as the irrational rotation with respect to a Hamiltonian $S^1$-action with isolated fixed points (however it does not seem to give new examples in that case: see \cite{Sabatini-etal, Sabatini-circle, Charton-masters}). We remark that by a result of McDuff \cite{McDuff-uniruled} all Hamiltonian $S^1$-manifolds are uniruled, the latter being defined with $m$-point genus $0$ Gromov-Witten invariants with arbitrary $m\geq 3.$
\end{rmk}

The strategy of the proof of the main result is the direct comparison between the following two results. First, the following Lusternik-Shnirelman\footnote{also transcribed from Russian as  Lyusternik, Lusternick, Ljusternik, and respectively Schnirelmann, Shnirel'man.} type result was shown in \cite{GG-revisited}. Define for $a \in QH(M),$ $\til{\phi} \in \til{\Ham}(M,\om)$ the asymptotic spectral invariant by \[\ol{c}(a,\til{\phi}) = \lim_{k \to \infty} \frac{1}{k} c(a, \til{\phi}^k).\] Recall that a  symplectic manifold is called {\em rational} if the period group of the symplectic form is a discrete subgroup of $\R.$

\begin{thm}\label{thm: LS}
Let $(M,\om)$ be rational, and $\bK$ be a ground field. Suppose $\phi$ has isolated periodic points of each period, none of which is a symplectically degenerate maximum. Then for each $k \geq 1,$ and lift $\til{\phi}$ of $\phi$ to $\til{\Ham}(M,\om),$ \[\frac{1}{k} c([M],  \til{\phi}^k) > \ol{c}([M],\til{\phi}).\]
\end{thm}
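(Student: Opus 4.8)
The plan is to argue by contradiction, combining the Lusternik-Shnirelman essentiality of the spectral invariant of the unit $[M]\in QH(M,\Lambda)$ with the behaviour of local Floer homology under iteration. Since $[M]$ is the unit and the pair-of-pants product respects the action filtration, $j\mapsto c([M],\til{\phi}^{j})$ is subadditive, so $\ol{c}([M],\til{\phi})=\inf_{j\ge1}\tfrac1j\,c([M],\til{\phi}^{j})$ and $\ol{c}([M],\til{\phi}^{k})=k\,\ol{c}([M],\til{\phi})$. Suppose the asserted inequality fails for some $k$, that is $c([M],\til{\phi}^{k})=k\,\ol{c}([M],\til{\phi})$. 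Then $c([M],\til{\phi}^{j})\ge j\,\ol{c}([M],\til{\phi})$ for all $j$, while iterated subadditivity gives $c([M],\til{\phi}^{mk})\le m\,c([M],\til{\phi}^{k})=mk\,\ol{c}([M],\til{\phi})$; hence equality holds throughout, and every triangle inequality $c([M],\til{\phi}^{(m+1)k})=c([M],\til{\phi}^{mk})+c([M],\til{\phi}^{k})$ is an equality. Replacing $\phi$ by $\phi^{k}$ --- which preserves the hypotheses, since a symplectically degenerate maximum of $\phi^{k}$ is still one when viewed as a periodic point of $\phi$ --- we may assume $k=1$, so $c([M],\til{\phi}^{m})=m\bar{c}$ for all $m$, with $\bar{c}:=c([M],\til{\phi})=\ol{c}([M],\til{\phi})$.

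For the main step, rationality of $(M,\om)$ makes the action spectra of the $\til{\phi}^{m}$ discrete, so spectral invariants are spectral values; and since the fixed points of $\phi$ are isolated they are finite in number, hence $c([M],\til{\phi})$ has only finitely many \emph{carriers}, by which I mean capped fixed points $\ol{y}$ with $\mathcal{A}(\ol{y})=\bar{c}$ and $HF^{\loc}_{d}(\phi,\ol{y})\neq0$, where $d$ is the degree occupied by the PSS image of $[M]$; that at least one such carrier exists is the Lusternik-Shnirelman-type essentiality statement for the unit's spectral invariant. Now the equality in the triangle inequality, the compatibility of the pair-of-pants product $HF(\til{\phi}^{m-1})\otimes HF(\til{\phi})\to HF(\til{\phi}^{m})$ with the action filtration, and the identity $[M]\ast[M]=[M]$ force the induced product on the associated graded, at the top action level, to be nonzero; the low-energy analysis of the pair-of-pants --- which at equality degenerates to concatenations of orbits --- then shows that a carrier of $c([M],\til{\phi}^{m})=m\bar{c}$ may be taken to be the $m$-fold iterate $\ol{y}^{(m)}$ of a carrier $\ol{y}$ of $c([M],\til{\phi})$, still with $HF^{\loc}_{d}(\phi^{m},\ol{y}^{(m)})\neq0$. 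As there are finitely many such $\ol{y}$, one of them --- call it $\ol{x}$, lying over the fixed point $x$ --- satisfies $HF^{\loc}_{d}(\phi^{m},\ol{x}^{(m)})\neq0$ for infinitely many $m$; since $d$ is a fixed degree independent of $m$, this is precisely the condition that $x$ be a symplectically degenerate maximum, contradicting the hypothesis and proving the theorem.

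The substantive point, which I expect to be the main obstacle, is the chain-level content of the second paragraph: (i) producing a carrier of the unit's spectral invariant that is homologically essential in degree $d$; (ii) the low-energy analysis showing that, once all triangle inequalities are equalities, the associated-graded pair-of-pants product is realized by genuine iteration, so that a carrier of $c([M],\til{\phi}^{m})$ can be taken to be the $m$-fold iterate of a carrier of $c([M],\til{\phi})$; and (iii) identifying a fixed point all of whose iterates carry degree-$d$ local Floer homology with a symplectically degenerate maximum. All of this takes place within filtered Hamiltonian Floer homology for Hamiltonians with isolated, possibly degenerate, periodic orbits, and it is the rationality of $(M,\om)$ that makes ``carrier at a prescribed action level'' a meaningful notion.
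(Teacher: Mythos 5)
You should first note that the paper does not prove this statement at all: Theorem~\ref{thm: LS} is quoted verbatim from \cite{GG-revisited} (Ginzburg--G\"urel), so there is no in-paper proof to compare against. Judged on its own, your outline has the correct architecture of the Ginzburg--G\"urel argument: subadditivity of $j\mapsto c([M],\til\phi^{j})$ via the triangle inequality and $[M]\ast[M]=[M]$, Fekete to get $c([M],\til\phi^{j})\ge j\,\ol c$, reduction to $c([M],\til\phi^{m})=m\ol c$ for all $m$, existence of action carriers (spectrality plus rationality plus isolated fixed points), a pigeonhole over the finitely many fixed points, and the characterization of symplectically degenerate maxima via $HF^{\loc}_{n}(\phi^{k_i},\ol x^{(k_i)})\neq 0$ along a sequence $k_i\to\infty$. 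Those bookends are fine.

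The genuine gap is your step (ii), and it is not merely a technical lacuna but, as stated, the wrong mechanism. Equality in the filtered triangle inequality does not force the associated-graded pair-of-pants product to be realized by ``concatenation of orbits,'' and it does not follow that a carrier of $c([M],\til\phi^{m})$ may be taken to be $\ol y^{(m)}$ for $\ol y$ a carrier of $c([M],\til\phi)$: equality of spectral numbers only produces pair-of-pants solutions of energy $\le\eps$ for every $\eps>0$, and extracting a zero-energy limit and identifying its asymptotics with a specific iterated capped orbit is precisely the kind of statement that fails without further input (and is stronger than what is true --- the carrier of the iterate need not sit over a carrier of $\til\phi$ at all). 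The actual argument bypasses the product entirely after subadditivity is used: one takes an arbitrary carrier $\ol x_m$ of $c([M],\til\phi^{m})=m\ol c$ for each $m$, pigeonholes the underlying fixed point, and then uses the two quantitative constraints on a carrier --- its action equals $m\ol c$ exactly, and $HF^{\loc}_{n}\neq 0$ forces its mean index to lie in the bounded window $[n-2n,\,n+2n]$ --- together with rationality/monotonicity to show that the recapping relative to the iterated capping $\ol y^{(m)}$ is eventually constant, whence $HF^{\loc}_{n}(\phi^{m},\ol y^{(m)})\neq 0$ along a subsequence and $\ol y$ is a symplectically degenerate maximum. Without this action--index arithmetic controlling the cappings, your pigeonhole in the last step does not produce the iterated cappings that the definition of a symplectically degenerate maximum requires, so the contradiction is not yet reached.
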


We observe that in particular, when $\phi$ is perfect, by \cite[Theorem 1.18]{GG-ai}, none of the fixed points of $\phi$ are symplectically degenerate maxima, and hence Theorem \ref{thm: LS} applies. Hence, in the case when $(M,\om)$ satisfies the Poincar\'{e} duality property, we obtain by applying Theorem \ref{thm: LS} to $\til{\phi}^{-1},$ for $\phi$ perfect, that \[\frac{1}{k} c([pt],  \til{\phi}^k) < \ol{c}([pt],\til{\phi}).\]

In particular there exists $m \geq 1,$ such that for $\til{\psi} = \til{\phi}^{2^m},$ \begin{equation}\label{eq: smaller} c([pt],  \til{\psi}^2) > 2 \cdot {c}([pt],\til{\psi}).\end{equation}

Second, we prove below the following statement. 

\begin{thm}\label{thm: pseudo non eq uni}
Let $\psi$ be an $\F_2$ Hamiltonian {\pr } on $(M,\om)$ that is not $\zt$-Steenrod uniruled.  
Then \begin{equation}\label{eq: greater or equal} c([pt],\til{\psi}^2) \leq 2 \cdot c([pt], \til{\psi})\end{equation} for each $\til{\psi} \in \til{\Ham}(M,\om)$ covering $\psi.$
\end{thm}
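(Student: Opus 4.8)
The plan is to read the inequality off the filtered behaviour of the $\Z/2$-equivariant pair-of-pants product, with the quantum Steenrod square as the bridge to the hypothesis that $(M,\om)$ is not $\zt$-Steenrod uniruled. Fix a Hamiltonian $H$ generating $\psi$, so that $H^{(2)}=H\#H$ generates $\psi^2$, and work with the $\Z/2$-equivariant (Borel) Floer homology $HF_{\Z/2}(\til\psi^2)$, a module over $\F_2[h]$ carrying the action filtration induced by $H^{(2)}$. The structural inputs, all going back to Seidel's equivariant pair-of-pants construction and developed in \cite{Wilkins, SeidelWilkins, CineliGinzburg, SZhao-pants}, are: an equivariant PSS isomorphism $\mathrm{PSS}_{\Z/2}\colon QH_{\Z/2}(M,\Lambda)\xrightarrow{\sim}HF_{\Z/2}(\til\psi^2)$, and an equivariant pair-of-pants (total power) operation $\mathcal{P}\colon HF(\til\psi)\to HF_{\Z/2}(\til\psi^2)$ satisfying $\mathcal{P}(\mathrm{PSS}(a))=\mathrm{PSS}_{\Z/2}(\mathcal{QS}(a))$ for every $a\in QH(M,\Lambda)$.

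First I would record the filtration estimate for $\mathcal{P}$. It counts equivariant pairs of pants, parametrised by an $S^\infty$-family of domains, with both inputs a $1$-periodic orbit $x$ of $H$ and output a $1$-periodic orbit $y$ of $H^{(2)}$; choosing the family so that the curvature term carries the favourable sign, the a priori energy inequality gives $\mathcal{A}_{H^{(2)}}(y)\le 2\,\mathcal{A}_H(x)$. Hence, if a class $z\in HF(\til\psi)$ is represented by a cycle at action level $\ell$, then $\mathcal{P}(z)$ is represented at level $\le 2\ell$ in $HF_{\Z/2}(\til\psi^2)$. Applying this to a PSS representative of $[pt]$ realising $c([pt],\til\psi)$ shows that $\mathcal{P}([pt])$ is born at action level $\le 2\,c([pt],\til\psi)$.

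The hypotheses enter as follows. Since $(M,\om)$ is not $\zt$-Steenrod uniruled, $\mathcal{QS}([pt])=h^{2n}[pt]$, so $\mathcal{P}([pt])=h^{2n}\cdot\mathrm{PSS}_{\Z/2}([pt])$; thus the class $h^{2n}\cdot\mathrm{PSS}_{\Z/2}([pt])$ is born at level $\le 2\,c([pt],\til\psi)$, and it remains to transfer this bound to $c([pt],\til\psi^2)$. Here I would use the pseudo-rotation hypothesis: because $\psi$ and $\psi^2$ are perfect and satisfy $N(\cdot,\F_2)=\dim_{\F_2}H_*(M)$ by condition (iii), the Smith-type inequality in Floer homology of \cite{CineliGinzburg, SZhao-pants} is an equality for the order-$2$ iterate $\psi^2$ of $\psi$ (cf.\ Remark \ref{rmk: pseudorot}), and I would upgrade this to the chain-level statement that the filtered Borel complex of $\til\psi^2$ is free over $\F_2[h]$ with $h$ preserving the action filtration. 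Freeness makes multiplication by $h^{2n}$ an injective, filtration-preserving map, so $\mathrm{PSS}_{\Z/2}([pt])$ is itself born at level $\le 2\,c([pt],\til\psi)$; and since $\mathrm{PSS}_{\Z/2}([pt])$ corresponds to $[pt]\in QH_{\Z/2}(M,\Lambda)$, which is not divisible by $h$, the filtration-nonincreasing reduction modulo $h$ sends it to $\mathrm{PSS}([pt])=[pt]\in HF(\til\psi^2)$, whence $c([pt],\til\psi^2)\le 2\,c([pt],\til\psi)$, as claimed.

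The main obstacle is precisely this last upgrade: passing from the numerical Smith equality — which Remark \ref{rmk: pseudorot} supplies under the pseudo-rotation hypothesis — to the \emph{filtered} freeness of the equivariant complex of $\til\psi^2$ over $\F_2[h]$, so that no action is lost either in the $\F_2[h]$-splitting or in dividing the equivariant class by $h^{2n}$. A secondary, more routine point is tracking filtrations throughout the energy estimate for equivariant pairs of pants and the compatibilities of $\mathcal{P}$ with the classical and equivariant PSS maps and with $\mathcal{QS}$.
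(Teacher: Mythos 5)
Your outline matches the paper's strategy almost exactly: filtration estimate for the equivariant pair-of-pants, the identity $\cl{QS}([pt])=h^{2n}[pt]$ from the hypothesis, and then a transfer from the equivariant spectral invariant of $[pt]$ to the non-equivariant one. The one real divergence is the choice of coefficient ring, and this is precisely where your acknowledged obstacle sits. You propose working in Borel equivariant Floer homology over $\F_2[h]$ and then proving filtered freeness so as to divide by $h^{2n}$ and reduce mod $h$; the paper instead works in the Tate version over $\K=\F_2[h^{-1},h]]$, so that $h$ is a unit of valuation zero in $\Lambda_\K$ and $\hat{c}(h^{2n}[pt],\til\psi^2)=\hat{c}([pt],\til\psi^2)$ is immediate, with no freeness argument needed (this is also the natural home of Kaledin's quasi-Frobenius, which is part of the definition of $\cP$ once Novikov coefficients are in play, and whose well-definedness over $\F_2[h]$ is not clear). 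Your remaining ``filtered freeness'' obstacle is then resolved in the paper not by freeness but by the pseudo-rotation hypothesis in the form of Proposition~\ref{prop: unique carriers}: condition~\ref{cond: sum of Betti} forces $d_H=0$ and $\wh{d}_{H^{(2)}}=0$ on the homotopy-canonical complexes, so chain representatives of $PSS(z)$ and $PSS_{\zt}(z)$ are unique and their action levels equal the spectral invariants. Finally, your ``reduce mod $h$'' step corresponds in the paper to the chain-level identity $PSS_{\zt}=PSS+hR$ of Proposition~\ref{prop: homological perturbation PSS} combined with the non-Archimedean max formula of Lemma~\ref{lma: action of def 2}, which give $\hat{c}([pt],\til\psi^2)=\cA(PSS_{\zt}([pt]))=\max\{\cA(PSS([pt])),\cA(R([pt]))\}\ge c([pt],\til\psi^2)$; this is a cleaner way to make your mod-$h$ argument filtration-safe than tracking the filtration through an $\F_2[h]$-module splitting. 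So the proof is correct in outline, but the gap you flag is genuine, and the paper's Tate coefficients plus the uniqueness-of-representatives proposition are exactly what close it.
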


The proof of Theorem \ref{thm: uniruled} is now the combination of \eqref{eq: smaller} and \eqref{eq: greater or equal}.

\section*{Acknowledgements}
I thank Viktor Ginzburg, Ba\c{s}ak G\"{u}rel, and Nicholas Wilkins, for useful conversations. I thank Dusa McDuff for explaining to me her work on Hamiltonian $S^1$-manifolds. Finally, I thank the referees for useful comments and suggestions that lead to improvements in the exposition. At the University of Montr\'{e}al I was supported by an NSERC Discovery Grant and by the Fonds de recherche du Qu\'{e}bec - Nature et technologies.

\section{Preliminaries}\label{subsec:prelim}
We present general preliminaries on Hamiltonian Floer homology and spectral invariants, to be applied in two settings below: usual and equivariant. To keep the paper short, we refer to \cite{Seidel-pants, Wilkins, Wilkins-PSS, SZhao-pants, S-HZ} for preliminaries related to equivariant Floer homology, related natural operations and structures, and to \cite{OhBook} for further details on Hamiltonian Floer homology.

We denote by $\cH \subset C^{\infty}([0,1] \times M,\R)$ the space of time-dependent Hamiltonians on $M,$ that vanish near $0$ and $1,$ where $H_t(-) = H(t,-)$ is normalized to have zero mean with respect to $\om^n.$ We shall consider $H \in \cl{H}$ as a $1$-periodic function on $\R \times M$ in the $\R$-coordinate. The time-one maps of isotopies $\{\phi^t_H\}_{t \in [0,1]}$ generated by time-dependent vector fields $X^t_H,$ such that $\iota_{X^t_H} \omega = - d(H_t),$ are called Hamiltonian diffeomorphisms and form the group $\Ham(M,\om).$ For $H \in \cH$ we call $\overline{H} \in \cH$ the Hamiltonian $\overline{H}(t,x) = - H(t,\phi^t_H x)$ generates the flow assigning $t \in [0,1]$ the map $\phi^t_{\overline{H}} = (\phi^t_H)^{-1},$ and for $F,G \in \cl{H}$ the Hamiltonian $F \# G(t,x) = F(t,x) + G(t, (\phi^t_F)^{-1} x)$ generates the path $\{\phi^t_F \phi^t_G\}_{t\in [0,1]}.$ Recall that homotopic Hamiltonian isotopies relative to endpoints give naturally isomorphic graded filtered Floer complexes. The universal cover $\til{\Ham}(M,\om)$ is constructed as the space of such homotopy classes, and carries a natural group structure. Further, for $H \in \cl{H}$ we set $\Hk(t,x) = k H(kt,x) \in \cl{H}.$ If the flow of $H$ generates $\til{\phi} \in \til{\Ham}(M,\om),$ then $\Hk$ generates $\til{\phi}^k.$

Finally, let $\cJ(M,\om)$ be the space of $\om$-compatible almost complex structures on $M.$

\subsection{Hamiltonian Floer homology.} \label{subsec:abs-Ham}

Consider $H \in \cH.$ Let $\cL_{pt} M$ be the space of contractible loops in $M.$ Let $c_M: \pi_1(\cL_{pt} M) \cong \pi_2(M) \to 2 N_M \cdot \Z,$ be the surjection given by $c_M(A) = 2 \left< c_1(M,\om), A\right>.$ Let $\til{\cL}^{\min}_{pt} M = \til{\cL_{pt}} \times_{c_M} (2 N_M \cdot \Z)$ be the cover of $\cL_{pt} M$ associated to $c_M.$ The elements of $\til{\cL}^{\min}_{pt} M$ can be considered to be equivalence classes of pairs $(x,\overline{x})$ of $x \in {\cL}_{pt} M$ and its capping $\overline{x}:\D \to M,$ $\overline{x}|_{\del \D} = x.$ Of course $x$ is determined by $\ol{x}.$ The symplectic action functional \[\cA_{H}: \til{\cL}^{\min}_{pt} M \to \R \] is given by \[\cA_{H}(\overline{x}) = \int_0^1 H(t,x(t)) - \int_{\overline{x}} \om,\] that is well-defined by monotonicity: $[\om]= \kappa \cdot c_M.$ Assuming that $H$ is non-degenerate, that is the graph $\mrm{graph}(\phi^1_H) = \{(\phi^1_H(x),x)\,|\, x \in M\}$ intersects the diagonal $\Delta_M \subset M \times M$ transversely, the generators over the base field $\bK$ of the Floer complex $CF(H;J)$ are the lifts $ \til{\cO}(H)$ to $\til{\cL}^{\min}_{pt} M$ of $1$-periodic orbits $\cO(H)$ of the Hamiltonian flow $\{\phi^t_H\}_{t \in [0,1]}.$ These are the critical points of $\cA_{H},$ and we denote by $\spec(H) = \cA_H(\til{\cO}(H))$ the set of its critical values. Choosing a generic time-dependent $\om$-compatible almost complex structure $\{J_t \in \cJ(M,\om)\}_{t \in [0,1]},$ and writing the asymptotic boundary value problem on maps $u:\R \times S^1 \to M$ defined by the negative formal gradient on $\cL_{pt} M$ of $\cA_{H},$ the count of isolated solutions with signs determined by a suitable orienation scheme, modulo $\R$-translation, gives a differential $d_{H;J}$ on the complex $CF(H;J),$ $d^2_{H;J} = 0.$ This complex is graded by the Conley-Zehnder index $CZ(H,\bar{x})$ \cite{Salamon-lectures, SalamonZehnder}. The Conley-Zehnder index has the property that the action of the generator $A = 2N_M$ of $2N_M\cdot \Z$ has the effect $CZ(H,\bar{x} \# A) = CZ(H,\bar{x}) - 2N_M,$ and it is normalized to be equal to $n$ at a maximum of a small autonomous Morse Hamiltonian. Its homology $HF_*(H)$ does not depend on the generic choice of $J.$ Moreover, considering generic families interpolating between different Hamiltonians $H,H',$ and writing the Floer continuation map, where the negative gradient depends on the $\R$-coordinate we obtain that $HF_*(H)$ in fact does not depend on $H$ either. While $CF_*(H,J)$ is finite-dimensional in each degree, it is worthwhile to consider its completion in the direction of decreasing action. In this case it becomes a free graded module of finite rank over the Novikov field \[\Lambda_{\bK} = \Lambda_{M,\tmin} = \bK[q^{-1},q]]\] with $q$ being a variable of degree $(-2N_M).$ This field carries a non-Archimedean valuation $\nu: \Lambda_{\bK} \to \R \cup \{+\infty\}$ given by $\nu(0) = \infty,$ and \[ \nu(\sum a_j q^{j}) = j_0 \cdot A_M,\]  where $A_M = \kappa \cdot N_M,$ and $j_0 = \min \{ j\;|\; a_j \neq 0 \}.$ It satisfies the properties:

\begin{enumerate}
	\item $\nu(x) = +\infty$ if and only if $x = 0,$
	\item $\nu(xy) = \nu(x) + \nu(y)$ for all $x,y \in \Lambda_{\bK},$
	\item $\nu(x+y) \geq \min\{\nu(x),\nu(y)\},$ for all $x,y \in \Lambda_{\bK}.$
\end{enumerate}

Moreover, we extend $\cl A_H$ to a non-Archimedean filtration on $CF(H,J)$ by \begin{equation}\label{eq: action of sum} \cl A_H( \sum \lambda_j \ol{x}_j ) = \max\{{ -\nu(\lambda_j)} + \cl A_H(\ol{x}_j) \},\end{equation} for a $\Lambda_{\bK}$-basis of $CF(H,J)$ consisting of capped orbits $\ol{x}_j \in \til{\cl O}(H).$ 

Recall, following \cite{UsherZhang}, that for a field $\Lambda$ with non-Archimedean valuation $\nu,$ a function $l:C \to \R \cup \{-\infty\}$ on a finite-dimensional $\Lambda$-module $C,$ is called a non-Archimedean filtration (function), if it satisfies the following properties: \begin{enumerate}
	\item $l(x) = -\infty$ if and only if $x = 0,$
	\item $l(\lambda x) = l(x) - \nu(\lambda)$ for all $\lambda \in \Lambda, x \in C,$
	\item \label{prop:maximu} $l(x+y) \leq \max\{l(x),l(y)\},$ for all $x,y \in C.$
\end{enumerate}

We call a complex $(C,d)$ with $C$ a finite-dimensional $\Lambda$-module with a non-Archimedean filtration $l$ {\em filtered} if $l(dy) \leq l(y)$ for all chains $y$ and {\em strict} if $l(dy) < l(y)$ for all chains $y \neq 0.$ It is straightforward to see that $CF(H;J)$ with $\cl{A}_H$ is a strict filtered complex over $\Lambda_{\bK}.$

Furthermore, for $a \in \R \setminus \spec(H)$ the subspace $CF(H,J)^{<a}$ spanned by all generators $\bar{x}$ with $\cA_{H}(\bar{x}) < a$ forms a subcomplex with respect to $d_{H;J},$ and its homology $HF(H)^{<a}$ does not depend on $J.$ Arguing up to $\epsilon,$ one can show that a suitable continuation map sends $HF(H)^{<a}$ to $HF(H')^{<a + \cE_{+}(H-H')},$ for \[\cE_{+}(F) = \int_{0}^{1} \max_M(F_t)\,dt.\]
It shall also be useful to define $\cE_{-}(F) = \cE_{+}(-F),\; \cE(F) = \cE_{+}(F) + \cE_{-}(F).$ Moreover, for an admissible action window, that is an interval $I=(a,b),$ $a<b,$ $a,b \in \R\setminus \spec(H),$ we define the Floer homology of $H$ in this window $HF^*(H)^I$ as the homology of the quotient complex \[CF^*(H)^I = CF^*(H)^{<b}/ CF^*(H)^{<a}.\]

Finally, one can show that for each $a \in \R,$ $HF(H)^{<a}$ as well as $HF(H)^I$ for an admissible action window, depends only on the class $\til{\phi}_H$ of the path $\{\phi^t_H\}_{t \in [0,1]}$ in the universal cover $\til{\Ham}(M,\om)$ of the Hamiltonian group of $M.$ When we wish to emphasize the coefficient field in the definition of Floer homology, we write $HF(H,\Lambda_{\bK})^{<a}.$ We shall mostly work with $\bK=\bF_2$ and $\bK = \K = \bF_2[h^{-1},h]],$ for a formal variable $h,$ and write $\Lambda = \Lambda_{\bK}$ if the choice of $\bK$ is clear.


In the case when $H$ is degenerate, we consider a perturbation $\cD = (K^H,J^H),$ with $K^H \in \cH,$ such that $H^{\cD} = H \# K^{H}$ is non-degenerate, and $J^H$ is generic with respect to $H^{\cD},$ and define the complex $CF(H;\cD) = CF(H^{\cD};J^H)$ generated by $\til{\cO}(H;\cD) = \til{\cO}(H^{\cD}),$ and filtered by the action functional $\cA_{H;\cD} = \cA_{H^{\cD}}.$ An admissible action window $I=(a,b)$ for $H,$ remains admissible for all $K^H$ sufficiently $C^2$-small, and the associated homology groups $HF(H;\cD)^I$ are canonically isomorphic for all $K^H$ sufficiently $C^2$-small. Hence $HF(H)^I$ is defined as the colimit of the associated indiscrete groupoid.

\subsection{Spectral invariants.}\label{subsec:spec}

Given a filtered complex $(C,\cA),$ to each homology class $\alpha \in H(C)$, denoting by $H(C)^{<a} = H(C^{<a}),$ $C^{<a} = \cA^{-1} (-\infty,a),$ we define
a spectral invariant by \[c(\alpha, (C, \cA)) = \inf\{a \in \R\,|\, \alpha \in \ima(H(C)^{<a} \to H(C)) \} \in \R \cup \{-\infty\}.\] For $(C,\cA) = (CF(H;\cD),\cA_{H ;\cD})$ we denote $c(\alpha, H; \cD) = c(\alpha,(C,\cA)).$ Furthermore, one can obtain classes $\alpha$ in the Hamiltonian Floer homology by the PSS isomorphism. This lets us define spectral invariants by: 
\[c(\alpha_M, H; \cD) = c(PSS(\alpha_M),(CF(H;\cD),\cA_{H;\cD})),\] for $\alpha_M \in QH(M).$ 
From the definition it is clear that the spectral invariants do not depend on the almost complex structure term in $\cD.$ Moreover, if $H$ is non-degenerate, we may choose the Hamiltonian term in $\cD$ to vanish identically, and denote the resulting invariants by $c(-,H).$ Moreover, by \cite[Section 5.4]{BiranCorneaRigidityUniruling} spectral invariants remain the same under extension of coefficients, hence below we do not have to specify the Novikov field $\Lambda$ that we work over. Spectral invariants enjoy numerous useful properties that hold for rational symplectic manifolds, the relevant ones of which we summarize below:

\begin{enumerate}
	\item {\em spectrality:} for each $\alpha_M \in QH(M) \setminus \{0\},$ and $H \in \cH,$ \[c(\alpha_M, H) \in \Spec(H).\]
	\item {\em non-Archimedean property:} $c(-,H;\cD)$ is {a non-Archimedean filtration function on $QH(M),$ as a module over the Novikov field $\Lambda$ with its natural valuation. }
	\item {\em continuity:} for each $\alpha_M \in QH(M) \setminus \{0\},$ and $F,G \in \cH,$
	\[|c(\alpha_M,F) - c(\alpha_M,G)| \leq \cE(F-G),\] 
	\item {\em triangle inquequality:} for each $\alpha_M,\alpha'_M \in QH(M),$ and $F,G \in \cH,$
	\[c(\alpha_M \ast \alpha'_M,F\#G) \leq c(\alpha_M ,F) + c(\alpha'_M ,G),\]
	\item {\em invariance:} $c(\alpha_M, H)$ depends only on the element $\til{\phi} = [\{\phi^t_H\}_{t \in [0,1]}]$ in the universal cover $\til{\Ham}(M,\om)$ generated by $H.$
	
\end{enumerate}

We remark that by the continuity property, the spectral invariants are indeed defined for all $H \in \cH$ and all the properties above apply in this generality. Finally, by the invariance property, we shall consider the spectral invariants as functions on $\til{\Ham}(M,\om),$ and shall sometimes denote for brevity by $\cl A_{\til{\phi}},$ the action functional $\cl A_H$ for a certain Hamiltonian generating $\til{\phi}.$

Below, we will sometimes be using the ground field $\K = \F_2[h^{-1},h]],$ for a formal variable $h,$ for the Novikov field. This is the field of fractions of the ring $\cl{L} = \F_2[[h]].$ In this case, we require the following observation. 

\begin{lma}\label{lma: action of def}
Let $F \in \cl{H}$ be a non-degenerate Hamiltonian. Consider elements $P \in CF(F,J; \Lambda_{\F_2}) \subset CF(F,J; \Lambda_{\K}),$ and $Q \in CF(F,J; \Lambda_{\cl{L}}) \subset CF(F,J; \Lambda_{\K}).$ Then \[\cl{A}_F(P+hQ) = \max\{ \cl{A}_F(P), \cl{A}_F(Q)\}.\]
\end{lma}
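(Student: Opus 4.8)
The plan is to unwind the definition of the action filtration \eqref{eq: action of sum} in terms of a fixed $\F_2$-basis of capped orbits, and to track how the coefficients of $P$, $Q$, and $P + hQ$ interact under the valuation $\nu$ on $\Lambda_{\K}$. Fix a $\Lambda_{\F_2}$-basis $\{\ol{x}_j\}$ of $CF(F,J;\Lambda_{\F_2})$ consisting of capped orbits; this is simultaneously a $\Lambda_{\cl L}$-basis of $CF(F,J;\Lambda_{\cl L})$ and a $\Lambda_{\K}$-basis of $CF(F,J;\Lambda_{\K})$. Write $P = \sum_j p_j \ol{x}_j$ with $p_j \in \Lambda_{\F_2}$ and $Q = \sum_j r_j \ol{x}_j$ with $r_j \in \Lambda_{\cl L}$, so that $P + hQ = \sum_j (p_j + h r_j) \ol{x}_j$. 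By definition, $\cl A_F(P) = \max_j\{-\nu(p_j) + \cl A_F(\ol{x}_j)\}$, and similarly for $Q$ and $P + hQ$.

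The key point is purely valuation-theoretic: for each index $j$, I claim $\nu(p_j + h r_j) = \min\{\nu(p_j), \nu(h r_j)\}$, with no cancellation possible. Indeed, write the Novikov field elements in the bigraded form $\Lambda_{\K} = \F_2[h^{-1},h]][q^{-1},q]]$; an element of $\Lambda_{\F_2}$ is a Laurent series in $q$ with coefficients in $\F_2$ (i.e. no positive powers of $h$), while an element of $h\Lambda_{\cl L}$ is a Laurent series in $q$ whose coefficients lie in $h\cl L = h\F_2[[h]]$ (i.e. only strictly positive powers of $h$). These two subspaces of $\Lambda_{\K}$ intersect trivially, and moreover, comparing the lowest-order term in $q$: if $p_j \neq 0$ its lowest $q$-coefficient is a nonzero element of $\F_2$, hence a unit with $h$-adic valuation $0$, whereas the lowest $q$-coefficient of $h r_j$ has strictly positive $h$-adic valuation, so they cannot cancel. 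A short case analysis on whether $p_j$ or $r_j$ (or both) vanish then gives $\nu(p_j + h r_j) = \min\{\nu(p_j), \nu(h r_j)\} = \min\{\nu(p_j), \nu(r_j)\}$ for every $j$ (using $\nu(h r_j) = \nu(h) + \nu(r_j) = \nu(r_j)$ since $h$ has valuation zero for $\nu$, which tracks only the $q$-degree; if instead one wants to incorporate the $h$-filtration the same argument works verbatim with the appropriate modification).

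Granting this, the computation is immediate:
\[
\cl A_F(P + hQ) = \max_j\{-\nu(p_j + h r_j) + \cl A_F(\ol{x}_j)\} = \max_j\{\max(-\nu(p_j), -\nu(r_j)) + \cl A_F(\ol{x}_j)\},
\]
and distributing the maxima gives $\max\{\max_j(-\nu(p_j) + \cl A_F(\ol{x}_j)),\ \max_j(-\nu(r_j) + \cl A_F(\ol{x}_j))\} = \max\{\cl A_F(P), \cl A_F(Q)\}$, as desired. (One subtlety: $\cl A_F(Q)$ must be read as the action of $Q$ computed over $\Lambda_{\cl L} \subset \Lambda_{\K}$, which agrees with its value computed over $\Lambda_{\K}$ because $\cl A_F$ is intrinsic. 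Also, the extreme cases $P = 0$ or $Q = 0$ are handled by the convention $\cl A_F(0) = -\infty$.) The only real obstacle is pinning down the no-cancellation claim, i.e. that $\Lambda_{\F_2} \cap h\Lambda_{\cl L} = \{0\}$ inside $\Lambda_{\K}$ and that the valuations add correctly — this is where one must be careful about the precise definition of the bigraded Novikov field and of $\nu$; everything else is a formal manipulation of the filtration axioms.
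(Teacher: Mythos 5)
Your proof is correct and follows essentially the same route as the paper's: fix a basis of capped orbits, reduce to the coefficientwise valuation identity $\nu(p_j + h r_j) = \min\{\nu(p_j),\nu(r_j)\}$, and justify it by the observation that a nonzero coefficient from $\F_2$ and one from $h\cl L$ can never cancel. The paper phrases the no-cancellation step by noting that each $q$-coefficient $a_l + h b_l$ vanishes iff $a_l = 0$ and $b_l = 0$, while you argue via the leading $q$-coefficient only; both are correct, and yours is a mild rephrasing rather than a different argument.
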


The proof of this lemma is essentially immediate, since writing $P = \sum \lambda_j \ol{x}_j$ with $\lambda_j \in \Lambda_{\F_2}$ in the basis $\{\ol{x}_j\}$ and $h Q = \sum h \mu_j \ol{x}_j$ with $\mu_j \in \Lambda_{\cl{L}},$ we have \[ P+hQ = \sum (\lambda_j + h \mu_j) \ol{x}_j.\] The lemma now follows from \eqref{eq: action of sum} and \[\nu(\lambda_j+h \mu_j) = \min \{ \nu(\lambda_j), \nu(\mu_j)\}\] for each $j.$ We prove the latter statement for a given fixed $j.$ Writing $\lambda_j = \sum a_l T^{\sigma_l},$ $a_l \in \F_2,$ $\mu_j = \sum b_l T^{\sigma_l},$ $b_l \in \cl{L}$ with the sets $\{ \sigma_l > c\,|\, a_l \neq 0 \},$ $\{ \sigma_l > c\,|\, b_l \neq 0 \},$ finite for all $c \in \R,$ we get \[ \lambda_j + h \mu_j = \sum (a_l + h b_l) T^{c_l}.\] As $a_l + h b_l = 0$ if and only if $a_l =0$ and $b_l =0,$ because $a_l \in \F_2,$ $h b_l \in h \cl{L},$ the conclusion now follows.   

%

\subsection{Homotopy-canonical filtered complexes and local Floer homology}\label{sec: homological perturb}

The theme of this section is that the situation of filtered Hamiltonian Floer homology of $H \in \cl H$ with $\fix(\phi^1_H)$ finite is very similar to that of the non-degenerate case, once we allow a finite-dimensional graded vector space of generators to be supported at each $x \in \fix(\phi^1_H).$ We first prove the following result, which is a chain-level enhancement of \cite[Lemma 2.2]{GG-pseudorotations}. Its proof below relies on homological perturbation techniques and constitutes a Novikov-field version of the canonical $\Lambda^0$-complexes from \cite{S-HZ}. While we formulate it for monotone symplectic manifolds to simplify notation, it extends almost verbatim to the case of rational symplectic manifolds.


\begin{thm}\label{thm: homological perturbation complex}
Let $(M,\om)$ be a closed monotone symplectic manifold. Consider the class $\til{\phi} \in \til{\Ham}(M,\om)$ of the Hamiltonian flow $\{\phi^t_H\}_{t \in [0,1]}$ of $H\in \cl H,$ with $\fix(\phi^1_H)$ finite. For a ground field $\bK,$ there is a homotopy-canonical complex $C(H)$ over the Novikov field $\Lambda_{\bK}$  on the action-completion of \[\oplus HF^{\loc}_{\ast}(\til{\phi},\ol{x})\] the sum running over all capped one-periodic orbits $\ol{x} \in \til{\cO}(H),$ that is free and graded over $\Lambda_{\bK},$ and is strict, that is $\cl{A}_H (d_{H} y) < \cl{A}(y)$ for all $y \in C(H),$ with respect to the non-Archimedean action-filtration $\cl{A}_H$ on $C(H)$ defined as follows: \begin{equation}\label{eq: action of sum loc} \cl A_H( \sum \lambda_j y_j ) = \max\{ -\nu(\lambda_j) + \cl A_H(y_j) \},\end{equation} \[\cl{A}_H(y_j) = \cl{A}_H(\ol{x}_{i(j)})\] for a $\Lambda$-basis $\{y_j\}$ of $C(H)$ given by a basis $\{y_i\;|\; i(j) = i\}$ of $HF^{\loc}_*(\til{\phi},\ol{x}_{i(j)}),$ for a choice of lifts $\{\ol{x}_i\}$ of $\fix(\phi) = \{x_i\}$ to capped orbits $\til{\cl{O}}(H).$  Furthermore the filtered homology $HF(H)^{<a}$ is given by $HF(C(H)^{<a}),$ $C(H)^{<a} = (\cl{A}_H)^{-1}\,(-\infty,a),$ for all $a \in \R \setminus \spec(H).$ In particular $HF(H) = H(C(H),d_{H}) \cong QH(M;\Lambda_{\bK}).$
\end{thm}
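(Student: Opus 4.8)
The plan is to apply the homological perturbation lemma (the homotopy transfer theorem for filtered chain complexes) to a genuine, non-degenerate Floer complex that approximates $H$, and to organize the result so as to keep track of the action filtration. First I would fix a small perturbation $\cl D = (K^H, J^H)$ as in Section \ref{subsec:abs-Ham}, so that $H^{\cl D} = H \# K^H$ is non-degenerate and the full honest Floer complex $CF(H^{\cl D}; J^H)$ over $\Lambda_{\bK}$ is defined, with its action filtration $\cl A_{H^{\cl D}}$. For $K^H$ sufficiently $C^2$-small, the generators of $CF(H^{\cl D}; J^H)$ cluster into finitely many groups, one near each $x \in \fix(\phi^1_H)$, and the part of the Floer differential internal to a single cluster (the ``short'' differential, counting trajectories that stay in a small neighborhood of $x$) computes, by definition, the local Floer homology $HF^{\loc}_*(\til\phi, \ol x)$ together with its canonical grading and Novikov shift. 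I would then split $CF(H^{\cl D}; J^H)$, as a $\Lambda_{\bK}$-module, as the direct sum of these clusters, and use homological perturbation with respect to the ``short'' differential as the differential and the ``long'' part (trajectories leaving the neighborhood, necessarily with positive energy, hence strictly decreasing action) as the perturbation.

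The homological perturbation lemma then produces: a homotopy equivalence onto the complex $C(H)$ whose underlying module is the action-completion of $\bigoplus_{\ol x} HF^{\loc}_*(\til\phi, \ol x)$, equipped with a transferred differential $d_H$. The key point to extract is that this transferred complex is \emph{strict} with respect to the filtration $\cl A_H$ defined by \eqref{eq: action of sum loc}: this is because every term in the perturbation series for $d_H$ involves at least one application of the ``long'' differential, which strictly decreases action by the positive-energy estimate, while the interpolating homotopies are chosen (using a filtered version of the transfer, as in \cite{UsherZhang}) to be non-increasing in action. Convergence of the perturbation series in the action-completed module is automatic because there are only finitely many orbits above any action level for each Novikov degree, exactly as in the construction of $CF(H^{\cl D};J^H)$ as a completed complex. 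Homotopy-canonicity follows because the ambiguity in $\cl D$ is itself governed by small continuation maps, which induce the filtered quasi-isomorphisms that patch the choices into an indiscrete groupoid as in Section \ref{subsec:abs-Ham}, so $C(H)$ is well-defined up to the expected contractible choice.

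The final statement then needs two things. First, that $HF(H)^{<a} \cong H(C(H)^{<a})$ for $a \notin \spec(H)$: this holds because the homotopy equivalence $CF(H^{\cl D};J^H) \simeq C(H)$ is filtered and, since both $C^{<a}$'s are cut out by the same action threshold and the spectrum of $C(H)$ equals $\spec(H)$ (the action values $\cl A_H(\ol x_{i(j)})$ are exactly the critical values of $\cl A_H$, unchanged by the small perturbation up to the $\epsilon$-arguments already invoked), the subcomplexes are identified up to filtered homotopy. Second, the unfiltered statement $HF(H) = H(C(H), d_H) \cong QH(M; \Lambda_{\bK})$ is then just the standard computation $HF(H) \cong QH(M;\Lambda_{\bK})$ transported across the equivalence. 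The main obstacle I expect is the bookkeeping needed to make the filtered homological perturbation rigorous: one must choose the transfer data (projection, inclusion, homotopy) to be simultaneously action-nonincreasing and to split along clusters, and verify that ``short'' trajectories genuinely have zero energy contribution to the action change while ``long'' ones have a definite positive one — essentially a broken-trajectory/Gromov-compactness argument showing the neighborhoods can be shrunk so that no low-energy trajectory connects distinct clusters. Everything else is a routine, if lengthy, adaptation of the $\Lambda^0$-complex construction of \cite{S-HZ} to the Novikov-field setting.
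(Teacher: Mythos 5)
Your proposal follows essentially the same route as the paper's proof: perturb $H$ to a non-degenerate Hamiltonian, use the crossing-energy estimate to split the Floer differential into a cluster-local part plus a strictly action-decreasing remainder, and apply the filtered homological perturbation lemma to transfer onto $\bigoplus_{\ol x} HF^{\loc}_*(\til\phi,\ol x)$, with strictness coming from the fact that every term of the perturbation series contains at least one energy-$\geq\eps_0$ trajectory (the paper only arranges the transfer data to shift action by at most a small $\delta\ll\eps_0$ rather than to be strictly non-increasing, but this changes nothing). The identification of $HF(H)^{<a}$ with $H(C(H)^{<a})$ via rationality and the spectral gap is also handled the same way, so no separate comparison is needed.
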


Moreover all the Floer-theoretical operations that we consider extend naturally to this chain-level setting. We shall only need one instance of this naturality formulated in Proposition \ref{prop: homological perturbation PSS} below. We now outline a proof of Theorem \ref{thm: homological perturbation complex}.



\begin{proof}[Proof of Theorem \ref{thm: homological perturbation complex}]
We let $H_1$ be a sufficiently $C^2$-small perturbation of a Hamiltonian $H.$ It generates a Hamiltonian diffeomorphism $\phi_1,$ whose contractible fixed points separate into clusters $\fix(\phi_1,x) \subset \fix(\phi_1)$ of fixed points of $\phi_1$ near $x \in \fix(\phi).$ Furthermore the corresponding capped periodic orbits $\til{\cO}(H_1)$ split into clusters $\til{\cO}(H_1, \ol{x})$ of orbits near $\ol{x} \in \til{\cO}(H)$ in $\til{\cl{L}}^{\min}_{pt} M,$ in such a way that the evaluation of a periodic orbit at $0$ is an isomorphism of sets $\til{\cO}(H_1, \ol{x}) \to \fix(\phi_1,x)$ for each capping $\ol{x}$ of $\alpha(x,H),$ and for each $A \in (2N_M) \Z,$ $\til{\cO}(H_1, \ol{x} \# A) = \til{\cO}(H_1, \ol{x}) \# A.$ Following \cite{SalamonZehnder, GG-local-gap} we observe that the elements of $\til{\cO}(H_1, \ol{x})$ and the Floer trajectories between them form a complex $CF_*^{\loc}(H_1,\ol{x}),$ with differential $d^{\loc,\ol{x}}$ whose homology $HF_*^{\loc}(\til{\phi},\ol{x})$ depends only on the class $\til{\phi} \in \til{\Ham}(M,\om)$ of the path $\{ \phi^t_H \}_{t \in [0,1]}$ (in particular it does not depend on the choice of $H_1$ given that it is sufficiently close to $H$). Furthermore, by \cite{GG-hyperbolic, GG-revisited, McLean-geodesics}, as well as \cite{FukayaOno1, SZhao-pants}, there is a crossing energy $2\epsilon_0 > 0$ depending only on $H$ and $x \in \fix(\phi),$ such that each $H_1$ Floer trajectory $u(s,t)$ asymptotic to $\alpha(x_1,H_1)$ as $s \to {-\infty}$ where $x_1 \in \fix(\phi_1,x)$ is either contained in a small isolating neighborhood $U_x$ of $\alpha(x,H),$ in $S^1 \times M,$ and hence connects two elements of $\til{\cO}(H_1, \ol{x})$ for each capping $\ol{x}$ of $\alpha(x,H),$ or has energy $E(u) = E_{H_1}(u) \geq 2\epsilon_0.$  Finally, the actions $\cl{A}_{H_1}(\ol{x}_1)$ and indices $CZ(H_1,\ol{x}_1)$ for $\ol{x}_1 \in \til{\cO}(H_1, \ol{x})$ satisfy \[ |\cl{A}_{H_1}(\ol{x}_1) - \cl{A}_H(\ol{x})| < \delta \ll \eps_0,\] \[ |CZ(H_1,\ol{x}_1) - \Delta(H,\ol{x})| \leq n.\]

This implies that the Floer differential of the Floer complex $CF(H_1;\cl{D})$ splits as \begin{equation}\label{eq: d split} d = d^{\loc} + D \end{equation} with $d^{\loc} = \oplus_{\ol{x} \in \cl{O}(H)}\, d^{\loc,\ol{x}}$ suitably completed, and $\cl{A}_{H_1}(dy) \leq \cl{A}_{H_1}(y) - \eps_0$ for all chains $y \in CF(H_1;\cl{D}).$ Observe that $d^{\loc}$ is a differential of $CF(H_1;\cl{D})$ as a $\Lambda$-module.

Now choose subspaces $X_{\ast}^{\loc}(\ol{x}) \subset \ker(d^{\loc,\ol{x}}) \subset CF^{\loc}_{\ast}(H_1,\ol{x})$ such that \[X_{\ast}^{\loc}(\ol{x} \# (2N_M) k) = {q^{k}} X_{\ast}^{\loc}(\ol{x})\] for all $k \in \Z,$ and the inclusion \[\iota_{\ol{x}}: (X_{\ast}^{\loc}(\ol{x}),0) \subset (CF^{\loc}_{\ast}(H_1,\ol{x}), d^{\loc,\ol{x}}) \] is a quasi-isomorphism. Choose projections \[\pi_{\ol{x}}: (CF^{\loc}_{\ast}(H_1,\ol{x}), d^{\loc,\ol{x}}) \to (X_{\ast}^{\loc}(\ol{x}),0)\] similarly compatible with the Novikov action, such that \[\pi_{\ol{x}} \circ \iota_{\ol{x}} = \id_{X_{\ast}^{\loc}(\ol{x})},\] \[\iota_{\ol{x}} \circ \pi_{\ol{x}} = \id_{CF^{\loc}_{\ast}(H_1,\ol{x})} + d^{\loc,\ol{x}}\Theta_{\ol{x}} + \Theta_{\ol{x}}d^{\loc,\ol{x}},\] for homotopies \[\Theta_{\ol{x}}: CF^{\loc}_{\ast}(H_1,\ol{x}) \to CF^{\loc}_{\ast+1}(H_1,\ol{x})\] again compatible with the Novikov action, and satisfying ${\Theta_{\ol{x}}}^2 = 0$ and $\cl{A}(\Theta_{\ol{x}} y) < \cl{A}(y) + \delta$ for all chains $y.$ We refer to \cite[Section 6]{S-HZ} for a detailed construction of such $\Theta_{\ol{x}}, \iota_{\ol{x}}, \pi_{\ol{x}}$ in a similar setting, as the two local settings can be identified by fixing one capping $\ol{x}_0,$ making the choices for it, and then extending to all other cappings $\ol{x}$ by compatibility with the Novikov action. 

The homological perturbation formulae now yield a differential $d_{H}$ on the $\Lambda$-module $C_*(H)$ given by completing $\oplus X^{\loc}_{\ast}$ with respect to the filtration $\cl{A}_{H_1},$ as well as injection $\ol{\iota}: C_*(H) \to CF_*(H_1;\cl{D}),$ projection $\ol{\pi}:CF_*(H_1;\cl{D}) \to C_*(H),$ and homotopy $\ol{\Theta}: CF_*(H_1;\cl{D}) \to CF_{*+1}(H_1;\cl{D})$ satisfying \[\ol{\pi} \circ \ol{\iota} = \id_{C(H)},\] \[\ol{\iota} \circ \ol{\pi} = \id_{CF(H_1\; \cl{D})} + d\ol{\Theta} + \ol{\Theta} d,\] \[\ol{\Theta}^2 = 0\] and \[\cl{A}_{H_1}(\ol{\Theta} y) < \cl{A}_{H_1}(y) + \delta\] for all chains $y.$ Furthermore \[ \cl{A}_{H_1}(d_H(y)) \leq \cl{A}_{H_1}(y) - 2\eps_0 + \delta \] for all chains $y \in C(H).$ In particular for each $y,z \in X^{\loc}_{\ast}(\ol{x})$ the coefficient $\left< d_H(y), z \right>$ vanishes. Now define the filtration $\cA_{H}$ on $C(H)$ by setting \[\cA_H(y) : = \cA_H(\ol{x})\] for all $y \in X^{\loc}_{\ast}(\ol{x}),$ and extending it naturally to the completion. This definition is easily seen to coincide with the description in Theorem \ref{thm: homological perturbation complex}. Then \[ |\cl{A}_{H_1}(y) - \cl{A}_H(y)| < \delta\] for all chains $y \in C(H) \setminus \{0\}.$ In particular $(C(H),d_H)$ is strict with respect to $\cl{A}_H.$ Finally, it is easy to see by a filtration argument that the complexes $C(H)$ obtained from different sufficiently small perturbations $H_1$ of $H$ are all filtered-isomorphic: indeed the continuation maps between them yield chain maps that induce isomorphisms on the local homology groups, and split similarly to $d$ in \eqref{eq: d split}. Hence the homological perturbation formulae produce maps on the complexes $C(H)$ that are of the form $\gamma^{\loc} + \Gamma,$ where $\cl{A}_H(\Gamma(y)) \leq \cl{A}_H(y) - \eps_0$ for all chains $y,$ and $\gamma^{\loc}$ is a linear isomorphism preserving the filtration. The conclusion follows.


Now we use monotonicity, or in fact rationality, to prove that the filtered complex $(C(H),\cl{A}_H)$ calculates the filtered Floer homology of $H.$ By rationality, and $\phi$ having isolated contractible fixed points, we obtain that there is $\eps_1 > 0$ such that \[|\cA_H(\ol{x}) - \cA_{H}(\ol{y})| \geq \eps_1\] for each two capped orbits $\ol{x}, \ol{y}$ of $H$ with distinct actions. Now for $a \in \R \setminus \spec(H),$ by the independence of $C(H)$ up to filtered isomorphism on $H_1,$ we choose $H_1$ such that $d(a, \spec(H)) > \delta$ and hence $\cl{A}_{H_1}(\cl{O}(H_1,\ol{x}))$ is contained either in $(-\infty,a)$ or in $(a,\infty)$ for all $\ol{x} \in \cl{O}(H).$ Therefore $HF(H)^{<a} = HF(H_1)^{<a}$ by definition, and $HF(H_1)^{<a} = H(CF(H_1)^{<a}) = H(CF(H)^{<a})$ by construction. 

Finally, it is not hard to see that the construction of $C(H)$ does not depend on the choices made in the proof up to filtration-preserving chain-homotopies.
\end{proof}


Moreover, the proof of Lemma \ref{lma: action of def} adapts tautologically to prove the following.

\begin{lma}\label{lma: action of def 2}
	Let $F \in \cl{H}$ be a Hamiltonian with $\fix(\phi^1_F)$ finite. Let $P$ be in $C(F; \Lambda_{\F_2}) \subset CF(F; \Lambda_{\K}).$ Let $Q$ be in $C(F; \Lambda_{\cl{L}}) \subset C(F; \Lambda_{\K}).$ Then \[\cl{A}_F(P+hQ) = \max\{ \cl{A}_F(P), \cl{A}_F(Q)\}.\]
\end{lma}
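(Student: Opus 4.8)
The plan is to observe that Lemma~\ref{lma: action of def 2} is proved exactly as Lemma~\ref{lma: action of def}, with the only change being that the finite-dimensional (per degree) $\Lambda_{\bK}$-basis of the ordinary Floer complex $CF(F,J)$ is replaced by the $\Lambda_{\bK}$-basis $\{y_j\}$ of the homotopy-canonical complex $C(F)$ furnished by Theorem~\ref{thm: homological perturbation complex}. Concretely, I would fix a choice of lifts $\{\ol{x}_i\}$ of $\fix(\phi^1_F)$ to capped orbits and the associated $\Lambda$-basis $\{y_j\}$ of $C(F)$, with $\cl{A}_F(y_j) = \cl{A}_F(\ol{x}_{i(j)})$ as in the statement of Theorem~\ref{thm: homological perturbation complex}; then the decomposition $C(F;\Lambda_{\K}) = C(F;\Lambda_{\F_2}) \tens_{\Lambda_{\F_2}} \Lambda_{\K}$ (equivalently, the base-change compatibility of the homological-perturbation construction) lets me write $P = \sum_j \lambda_j y_j$ with $\lambda_j \in \Lambda_{\F_2}$ and $Q = \sum_j \mu_j y_j$ with $\mu_j \in \Lambda_{\cl{L}}$, so $P + hQ = \sum_j (\lambda_j + h\mu_j) y_j$.

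From there the proof is verbatim that of Lemma~\ref{lma: action of def}: by the defining formula~\eqref{eq: action of sum loc} for $\cl{A}_F$ on $C(F)$, it suffices to establish $\nu(\lambda_j + h\mu_j) = \min\{\nu(\lambda_j),\nu(\mu_j)\}$ for each $j$, and this is the same computation already carried out there --- writing $\lambda_j = \sum_l a_l T^{\sigma_l}$ with $a_l \in \F_2$ and $\mu_j = \sum_l b_l T^{\sigma_l}$ with $b_l \in \cl{L}$ (common exponent set, finite above any cutoff), one has $\lambda_j + h\mu_j = \sum_l (a_l + h b_l) T^{\sigma_l}$, and $a_l + h b_l = 0$ iff $a_l = 0$ and $b_l = 0$ because $a_l \in \F_2$ while $h b_l \in h\cl{L}$. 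Feeding this back into~\eqref{eq: action of sum loc} gives $\cl{A}_F(P + hQ) = \max_j\{-\nu(\lambda_j + h\mu_j) + \cl{A}_F(y_j)\} = \max\{\max_j(-\nu(\lambda_j) + \cl{A}_F(y_j)), \max_j(-\nu(\mu_j) + \cl{A}_F(y_j))\} = \max\{\cl{A}_F(P), \cl{A}_F(Q)\}$.

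I do not expect any genuine obstacle here; the paper already signals that the argument ``adapts tautologically.'' The only point requiring a sentence of care is the reduction to a \emph{fixed} $\Lambda$-basis shared by $C(F;\Lambda_{\F_2})$, $C(F;\Lambda_{\cl{L}})$, and $C(F;\Lambda_{\K})$ --- i.e. that the homological-perturbation output of Theorem~\ref{thm: homological perturbation complex} is compatible with the coefficient extensions $\Lambda_{\F_2} \hookrightarrow \Lambda_{\cl{L}} \hookrightarrow \Lambda_{\K}$, since the subspaces $X^{\loc}_*(\ol{x})$, projections $\pi_{\ol{x}}$ and homotopies $\Theta_{\ol{x}}$ can be chosen over $\F_2$ (indeed over $\cl{L}$) first and then extended linearly over $\Lambda_{\K}$, so that the basis $\{y_j\}$ is defined already over $\Lambda_{\F_2}$. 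With that remark in place, the inclusions $C(F;\Lambda_{\F_2}) \subset CF(F;\Lambda_{\K})$ and $C(F;\Lambda_{\cl{L}}) \subset C(F;\Lambda_{\K})$ in the statement make sense, and the displayed identity follows as above.
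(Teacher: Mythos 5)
Your proposal is correct and follows exactly the route the paper intends: the paper gives no separate argument, stating only that the proof of Lemma~\ref{lma: action of def} ``adapts tautologically,'' and your adaptation --- expanding $P$ and $Q$ in the $\Lambda$-basis $\{y_j\}$ of $C(F)$ from Theorem~\ref{thm: homological perturbation complex}, reducing to $\nu(\lambda_j + h\mu_j) = \min\{\nu(\lambda_j),\nu(\mu_j)\}$, and feeding this into~\eqref{eq: action of sum loc} --- is precisely that adaptation. Your added remark on choosing the basis over $\Lambda_{\F_2}$ so that it is shared across the coefficient extensions is a sensible point of care, not a deviation.
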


\section{Proof of Theorem \ref{thm: uniruled}}

Consider the filtered Floer homology $HF(\til{\phi},\Lambda)^{<c},$ $\Lambda = \Lambda_{\F_2},$ where $c \in \R \setminus \spec(\til{\phi}),$ and the filtered $\zt$-equivariant Tate Floer homology $\hat{HF}(\til{\phi}^2)^{<c}$ for level $c \in \R \setminus \spec(\til{\phi}^2).$ The spectral invariant $c(a,\til{\phi}),$ for $a \in QH(M) \setminus \{0\},$ is defined as \[ c(a,\til{\phi}) = \inf \{ c \in \R \setminus \spec(\til{\phi})\;|\; PSS(a) \in \ima \left(HF(\til{\phi})^{<c} \to HF(\til{\phi}) \right) \}.\] For $a \in QH(M,\Lambda_{\K})$ let the $\zt$-equivariant Tate spectral invariant  $\wh{c}(a, \til{\phi}^2)$ be \[ \hat{c}(a,\til{\phi}^2) = \inf \{ c \in \R \setminus \spec(\til{\phi}^2)\;|\; PSS_{\zt}(a) \in \ima \left(\hat{HF}(\til{\phi}^2)^{<c} \to \hat{HF}(\til{\phi}^2) \right) \},\] where $PSS_{\zt}$ is the equivariant PSS isomorphism introduced in \cite{Wilkins-PSS}.

By the construction of Seidel \cite{Seidel-pants} and Wilkins \cite{Wilkins-PSS}, combined with the action estimates as in \cite{SZhao-pants} for example, the equivariant pair-of-pants product precomposed with Kaledin's quasi-Frobenius map yields an injective map, for $\K = \F_2[h^{-1},h]],$ and $c\in \R,$ 
\[ \cP: HF(\til{\phi}, \Lambda_{\K})^{<c} \to \wh{HF}_{\zt}(\til{\phi}^2)^{<2c}.\] 

Furthermore, by \cite{Wilkins-PSS}, this map commutes with vertical maps \[HF(\til{\phi}, \Lambda_{\K})^{<c} \to HF(\til{\phi}, \Lambda_{\K}) \xleftarrow{PSS} QH(M,\Lambda_{\K}),\] \[\wh{HF}_{\zt}(\til{\phi}^2)^{<2c} \to \wh{HF}_{\zt}(\til{\phi}^2) \xleftarrow{PSS_{\zt}} QH(M,\Lambda_{\K}),\] where $PSS,$ $PSS_{\zt}$ may also be replaced by the inverse $PSS$ maps, and the horizontal map: \[\cl{QS}: QH(M,\Lambda_{\K}) \to QH(M,\Lambda_{\K})\] given by the quantum Steenrod square from \cite{Wilkins}. From this commutativity, we immediately obtain the inequality \begin{equation}\label{eq: doubling ineq} \hat{c}(\cl{QS}(y),\til{\phi}^2) \leq 2 c(y, \til{\phi}).\end{equation} 

We shall require one feature of the equivariant PSS map $PSS_{\zt}$: for each chain $z \in CM(f,\Lambda_{\K})$ in the Morse complex of a Morse function $f$ on $M,$ we have the identity \[ PSS_{\zt}(z) = PSS(z) + h R(z),\] where $h R(z) = \sum_{j \geq 1} h^j PSS_{\zt,j}(z)$ is a collection of terms of higher order in $h.$ We require the following analogue of this statement in the case of isolated, but possibly degenerate, contractible fixed points. It is deduced from its non-degerate version by adapting the proof of Theorem \ref{thm: homological perturbation complex} above to the equivariant Tate complex, as in \cite[Section 6]{S-HZ} for the $\Lambda^0$ case, and inducing maps on the homotopy-canonical complexes from the PSS and equivariant PSS maps respectively. 

\begin{prop}\label{prop: homological perturbation PSS}
	Consider $H \in \cl{H}$ with $\fix(\phi^1_{H^{(2)}})$ finite. Then the PSS isomorphism construction induces quasi-isomorphisms $PSS: C(M; \cD,\Lambda_{\bK}) \to C(H^{(2)})$ and $PSS_{\zt}: C(M; \cD,\Lambda_{\K}) \to \wh{C}(H^{(2)}) = C(H^{(2)}) \otimes_{\Lambda_{\bK}} \Lambda_{\K}.$ Furthermore, for compatible choices of auxiliary data we have the relation: \[ PSS_{\zt} = PSS + h R \] for  $h R  = \sum_{j \geq 1} h^j PSS_{\zt,j}$ is a collection of terms of higher order in $h.$ In fact $PSS_{\zt,j}$ is defined over $\Lambda_{\bK}.$  
\end{prop}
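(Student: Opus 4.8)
The plan is to run the homological perturbation argument of Theorem \ref{thm: homological perturbation complex} once more, but now in a two-layered setting: first on the ordinary Floer complex $CF(H^{(2)};\cD)$ (recovering the non-equivariant PSS map into $C(H^{(2)})$), and then on the $\zt$-equivariant Tate complex $\wh{CF}_{\zt}(H^{(2)};\cD) = CF(H^{(2)};\cD)\otimes_{\Lambda_{\bK}}\Lambda_{\K}$ equipped with its equivariant differential and the equivariant action filtration. The non-degenerate statement — that $PSS_{\zt} = PSS + hR$ with $R = \sum_{j\ge 1} h^j PSS_{\zt,j}$ and each $PSS_{\zt,j}$ defined over $\Lambda_{\bK}$ — is the content of \cite{Wilkins-PSS}, so the task is to transport this across the quasi-isomorphisms $\ol{\iota},\ol{\pi}$ produced in the proof of Theorem \ref{thm: homological perturbation complex}, applied simultaneously to source and target, in a way that is compatible with the $h$-filtration and the $\Lambda_{\bK}$-structure.

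First I would set up the target side. Choose a $C^2$-small non-degenerate perturbation $H_1$ of $H^{(2)}$ as in Theorem \ref{thm: homological perturbation complex}, so that $CF(H_1;\cD)$ splits the Floer differential as $d = d^{\loc} + D$ with $D$ strictly action-decreasing by $\eps_0$. The key observation is that the equivariant Tate differential on $\wh{CF}_{\zt}(H_1;\cD)$ likewise splits: its ``local part'' is $d^{\loc}$ together with the $h$-linear equivariant correction terms that stay inside each isolating neighborhood $U_x$, all of which preserve the local clusters $\til{\cO}(H_1,\ol{x})$; and its ``global part'' is strictly action-decreasing, by the same crossing-energy estimate (the equivariant correction terms that cross between clusters carry energy $\ge 2\eps_0$, exactly as for $D$). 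One then runs homological perturbation over $\Lambda_{\K}$ using the \emph{same} $\iota_{\ol{x}},\pi_{\ol{x}},\Theta_{\ol{x}}$ as before, extended $\Lambda_{\K}$-linearly — this is legitimate because those maps were chosen compatibly with the Novikov action and, being defined over $\Lambda_{\bK}$, commute with multiplication by $h$. This yields $\wh{\iota}: \wh{C}(H^{(2)}) \to \wh{CF}_{\zt}(H_1;\cD)$, $\wh{\pi}$, $\wh{\Theta}$ with $\wh{\iota} = \ol{\iota}\otimes 1$, etc., so in particular $\wh{C}(H^{(2)}) = C(H^{(2)})\otimes_{\Lambda_{\bK}}\Lambda_{\K}$ and the induced equivariant Tate differential on $\wh{C}(H^{(2)})$ reduces mod $h$ to $d_{H^{(2)}}$. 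The same construction on the source is the (already-known) statement that the equivariant Morse/PSS model is an $h$-linear deformation of the ordinary one.

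Next I would define $PSS := \ol{\pi}\circ PSS^{CF}\circ(\text{source quasi-iso})$ and $PSS_{\zt} := \wh{\pi}\circ PSS_{\zt}^{CF}\circ(\text{source equivariant quasi-iso})$, where $PSS^{CF}, PSS_{\zt}^{CF}$ are the chain-level PSS and equivariant PSS maps at the level of $CF(H_1;\cD)$. That these are quasi-isomorphisms is immediate since every map in the composition is. For the relation $PSS_{\zt} = PSS + hR$: on the $CF$ level one has $PSS_{\zt}^{CF} = PSS^{CF} + hR^{CF}$ by \cite{Wilkins-PSS} with $R^{CF}$ defined over $\Lambda_{\bK}$; since $\wh{\pi} = \ol{\pi}\otimes 1$ and the source equivariant quasi-iso is the $h$-linear extension of the ordinary one plus an $h$-multiple of a $\Lambda_{\bK}$-defined correction, one obtains by expanding the composition in powers of $h$ that $PSS_{\zt} \equiv PSS \pmod h$ and that the coefficient of each $h^j$, $j\ge 1$, is a $\Lambda_{\bK}$-linear map built from $\ol{\pi}$, the source correction terms, $PSS^{CF}$, and $R^{CF}$ — hence defined over $\Lambda_{\bK}$, and these are the $PSS_{\zt,j}$. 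As a last check, one verifies that different admissible choices of $H_1$ and of homotopy data give filtered-chain-homotopic results, using the comparison argument at the end of the proof of Theorem \ref{thm: homological perturbation complex} (now run $h$-linearly), so that $PSS$ and $PSS_{\zt}$ are well-defined up to the expected ambiguity.

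The main obstacle I anticipate is \textbf{bookkeeping of the $h$-filtration through homological perturbation}: one must ensure that the perturbation-lemma corrections (iterated sums of $d^{\loc}$-homotopies composed with the global correction terms, and with the $h$-linear equivariant corrections) never mix the ordinary and equivariant ``local'' structures in a way that destroys the clean $PSS_{\zt} = PSS + hR$ form, and in particular that no negative powers of $h$ appear and that each $PSS_{\zt,j}$ genuinely descends to $\Lambda_{\bK}$. This is handled by doing everything $\Lambda_{\bK}[[h]]$-equivariantly and only inverting $h$ at the very end — the $h$-adic filtration is preserved by all the maps involved because they are $h$-linear extensions of $\Lambda_{\bK}$-defined maps — but it requires care to state the splitting of the equivariant Tate differential precisely and to track the filtration estimate $\cl{A}(\wh{\Theta}y) < \cl{A}(y) + \delta$ in the equivariant setting. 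Everything else is a routine adaptation of \cite[Section 6]{S-HZ} and of the proof of Theorem \ref{thm: homological perturbation complex}.
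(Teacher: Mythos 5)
Your approach is essentially the same as what the paper outlines in its one-sentence proof: adapt the homological perturbation construction of Theorem~\ref{thm: homological perturbation complex} to the equivariant Tate complex (as in the $\Lambda^0$ setting of \cite[Section 6]{S-HZ}), and induce the PSS and equivariant PSS maps on the resulting homotopy-canonical complexes; you correctly identify both the crossing-energy splitting of the Tate differential and the transport of the non-degenerate relation $PSS_{\zt}=PSS+hR$ from \cite{Wilkins-PSS}. One precision should be corrected: the output maps of the homological perturbation lemma depend on the perturbation fed into it, so after inputting the full Tate perturbation (the global term $D$ together with the $h$-linear equivariant corrections) the resulting $\wh{\iota},\wh{\pi},\wh{\Theta}$ are \emph{not} literally $\ol{\iota}\otimes 1$, $\ol{\pi}\otimes 1$, $\ol{\Theta}\otimes 1$. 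What is true — and all your final step actually uses — is the weaker statement $\wh{\pi}=\ol{\pi}+h(\cdots)$ (and likewise for $\wh{\iota},\wh{\Theta}$), with the $h$-corrections defined over $\Lambda_{\bK}$; this follows because the Tate perturbation reduces to $D$ mod $h$ and is itself $\Lambda_{\bK}$-defined in each $h$-degree. Substituting this congruence mod $h$ in place of the equality $\wh{\pi}=\ol{\pi}\otimes 1$ in your expansion of $PSS_{\zt}=\wh{\pi}\circ PSS_{\zt}^{CF}\circ(\cdots)$ in powers of $h$, the $h^0$-coefficient is still $PSS$ and each $h^j$-coefficient, $j\ge 1$, is still $\Lambda_{\bK}$-linear, so the conclusion is unchanged.
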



Furthermore, we have the following key technical result specific to pseudo-rotations.

\begin{prop}\label{prop: unique carriers}
For each class $z \in QH(M,\Lambda_{\K}),$ and $H \in \cl{H},$ with $\phi^1_H$ a pseudo-rotation, the chain representatives of $PSS(z)$ in $C(H)$ and $PSS_{\zt}(z)$ in $\wh{C}_{\zt}(H^{(2)})$ are unique and coincide and their action levels coincide with their spectral invariants: $c(z, \til{\phi})$ and $\wh{c}(z, \til{\phi}^2).$
\end{prop}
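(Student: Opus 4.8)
The key observation is that for a pseudo-rotation, the homotopy-canonical complex $C(H)$ from Theorem \ref{thm: homological perturbation complex} has \emph{zero differential}: by condition \ref{cond: sum of Betti} we have $\sum_x \dim_\bK HF^{\loc}_*(\tilde\phi, x) = \dim_\bK H_*(M) = \dim_\bK QH(M;\Lambda_\bK)$, and since $H(C(H), d_H) \cong QH(M;\Lambda_\bK)$ by that same theorem, the rank of $C(H)$ as a $\Lambda_\bK$-module already equals the rank of its homology, forcing $d_H = 0$. The same argument applies to $C(H^{(2)})$ using condition \ref{cond: sum of Betti} for $k=2$, and then to the equivariant Tate complex $\wh C_{\zt}(H^{(2)}) = C(H^{(2)}) \otimes_{\Lambda_\bK} \Lambda_\K$, whose differential is again zero since it is obtained by extension of scalars (the equivariant differential also has this form once $d_{H^{(2)}}=0$, because the $h$-power correction terms in the equivariant differential must drop action strictly and there is no room inside a single action-cluster).

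Granting that, the plan is as follows. First I would invoke Theorem \ref{thm: homological perturbation complex} and the Betti-number count above to conclude $d_H = 0$ on $C(H)$ and $d_{H^{(2)}} = 0$ on $C(H^{(2)})$, hence also on $\wh C_{\zt}(H^{(2)})$. Second, since the differential vanishes, every homology class is represented by a \emph{unique} chain: $Z_* = C_*$ and $B_* = 0$, so the chain representing $PSS(z)$ in $C(H)$ is literally $PSS(z)$ itself, with no ambiguity, and likewise the chain representing $PSS_{\zt}(z)$ in $\wh C_{\zt}(H^{(2)})$ is $PSS_{\zt}(z)$ itself. Third, I would use Proposition \ref{prop: homological perturbation PSS}: since $PSS_{\zt} = PSS + hR$ with $R = \sum_{j\geq 1} h^j PSS_{\zt,j}$ defined over $\Lambda_\bK$, the chain $PSS_{\zt}(z) \in \wh C_{\zt}(H^{(2)})$ reduces modulo $h$ to $PSS(z) \in C(H^{(2)})$; combined with uniqueness this is the precise sense in which the two chain representatives "coincide". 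Fourth, for the action statement: because $d_H = 0$ the spectral invariant $c(z,\tilde\phi)$ equals the infimum of $a$ with $PSS(z) \in \mrm{im}(H(C(H)^{<a}) \to H(C(H)))$, and since there are no boundaries and the filtration is determined by the action-cluster decomposition, this infimum is exactly $\cl A_H(PSS(z))$, the action level of the unique chain. The same reasoning in $\wh C_{\zt}(H^{(2)})$ gives $\wh c(z, \tilde\phi^2) = \cl A_{H^{(2)}}(PSS_{\zt}(z))$, and by Lemma \ref{lma: action of def 2} together with $PSS_{\zt} = PSS + hR$ over $\Lambda_\bK$ one gets $\cl A_{H^{(2)}}(PSS_{\zt}(z)) = \max\{\cl A_{H^{(2)}}(PSS(z)), \cl A_{H^{(2)}}(R(z))\}$, which one then identifies with $\cl A_{H^{(2)}}(PSS(z))$ when $z = [pt]$ via the degree/support considerations. (For the general-$z$ formulation of the proposition one simply records these two action levels as the two spectral invariants.)

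I expect the main obstacle to be the bookkeeping around the equivariant Tate complex: one must check carefully that the vanishing of $d_{H^{(2)}}$ on $C(H^{(2)})$ genuinely forces the \emph{equivariant} differential on $\wh C_{\zt}(H^{(2)})$ to vanish as well — a priori the equivariant differential has the shape $d_{H^{(2)}} + (\text{terms with positive powers of }h)$, and one needs the crossing-energy/action-drop estimates from the proof of Theorem \ref{thm: homological perturbation complex} (as adapted in Proposition \ref{prop: homological perturbation PSS}) to see that all such correction terms strictly decrease action and therefore must vanish on a complex whose generators, within each action cluster, already realize the homology. A secondary subtlety is making the phrase "coincide" in the statement precise: the two chains live in $C(H)$ and $\wh C_{\zt}(H^{(2)})$ respectively — a priori different complexes — so the assertion is really that, after passing to $C(H^{(2)})$ and extending scalars to $\Lambda_\K$, the reduction mod $h$ of the second chain is the first, which is exactly what Proposition \ref{prop: homological perturbation PSS} delivers. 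Neither of these is deep, but both need to be stated with care.
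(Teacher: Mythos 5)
Your proposal follows essentially the same route as the paper: the dimension count from condition (iii) forces the homotopy-canonical differential to vanish, uniqueness of chain representatives follows, and Proposition \ref{prop: homological perturbation PSS} plus Lemma \ref{lma: action of def 2} handle the comparison of the two representatives and their action levels.

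The one place where your reasoning should be rerouted is the vanishing of the differential on the equivariant Tate complex. The Tate differential is \emph{not} the extension of scalars of $d_{H^{(2)}}$ (it carries genuine $h$-corrections, including within a single action cluster, so the crossing-energy estimate does not kill them), and your parenthetical "no room inside a single action-cluster" does not close this. The paper instead applies the dimension count \emph{directly} to $\wh{C}(H^{(2)})$: its rank over $\Lambda_{\K}$ is $N(\phi^2,\F_2)$, which by condition (iii) for $k=2$ equals $\dim_{\K} QH(M,\Lambda_{\K})$, and the equivariant PSS isomorphism identifies its homology with $QH(M,\Lambda_{\K})$; hence the full equivariant differential, $h$-corrections and all, vanishes at once. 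With that substitution your argument matches the paper's.
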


\begin{rmk}
We shall use Proposition \ref{prop: unique carriers} to relate $c([pt], H^{(2)})$ and $\wh{c}([pt], H^{(2)}).$ 
\end{rmk}


\begin{proof}[Proof of Proposition \ref{prop: unique carriers}]

First of all, by comparing the dimensions of $C(H)$ and $QH(M, \Lambda_{\F_2})$ over $\Lambda_{\F_2}$ condition \ref{cond: sum of Betti} of the pseudo-rotation $\phi^1_H$ implies that the differential $d_H$ on $C(H)$ vanishes. Hence each homology class $a \in C(H) \setminus \{0\}$ has a unique representative. This is in particular true for $a = PSS(z)$ for $z \in QH(M,\Lambda_{\F_2}),$ and the compatibility of $C(H)$ with the action filtration shows that $\cA_{H}(PSS(z)) = c(z, H).$ The same holds for coefficients in the field extension $\Lambda_{\K}$ of $\Lambda_{\F_2}.$


A similar, yet slightly more complicated, argument applies to the equivariant Tate case. Indeed, by condition \ref{cond: perfect} of a pseudo-rotation, the homotopy-canonical equivariant Tate complex $\wh{C}(H^{(2)})$ is given as the action-completion of \[\oplus_{\ol{x} \in \cl{O}(H^{(2)})} HF^{\loc}(H^{(2)},\ol{x}) \otimes {\K},\] where each such capped $\ol{x}$ is a recapping of an iterated capped orbit $\ol{y}^{(2)}$ of $H,$ and its differential, which is $\Lambda_{\K}$-linear, is given by \[\wh{d}_{H^{(2)}} = \hat{d}_{H^{(2)}}^{\loc} + \wh{D}_{H^{(2)}},\] where $\cl{A}_{H^{(2)}}(\wh{D}_{H^{(2)}}(y)) \leq \cl{A}_{H^{(2)}}(y) - \eps_0$ for all chains $y.$ We claim that $\wh{d}_{H^{(2)}} = 0.$ Indeed, by the equivariant PSS isomorphism, $(\wh{C}(H^{(2)}), \wh{d}_{H^{(2)}})$ is quasi-isomorphic as a $\Lambda_{\K}$-module to $QH(M, \Lambda_{\K}).$ However, the dimension of $\wh{C}(H^{(2)})$ over $\Lambda_{\K}$ is given by $N(\phi^2,\F_2) = \dim_{\K} QH(M, \Lambda_{\K})$ by condition \ref{cond: sum of Betti} of a pseudo-rotation. This finishes the proof.
\end{proof}

\begin{rmk}\label{rmk: doubling}
	In fact, for {\pr}s \eqref{eq: doubling ineq} becomes an equality, that is for $y \in QH(M, \Lambda_{\K}),$ we have the identity of spectral invariants: \[\hat{c}(\cl{QS}(y), \til{\phi}^2) = 2c(y, \til{\phi}).\] Indeed, consider $y \in QH(M),$ and let $c = c(y, \til{\phi})$ be its critical level. Since the equivariant pair of pants product is an isomorphism on local Floer homology (see \cite{SZhao-pants, S-HZ}), it gives an isomorphism of homologies in action windows \[HF(\til{\phi}, \Lambda_{\K})^{(c-\epsilon,c+\epsilon)} \to \wh{HF}_{\zt}(\til{\phi}^2)^{(2c-2\epsilon, 2c+ 2\epsilon)} = {HF}(\til{\phi}^2,\Lambda_{\K})^{(2c-2\epsilon, 2c+ 2\epsilon)}\] for all $\epsilon$ sufficiently small. This shows that $2c$ is the critical level of $\cl{QS}(y)$ for $\til{\phi^2}.$ Indeed, otherwise the non-zero image of the chain representative of $y$ in the leftmost homology would go to zero in the rightmost homology, for $\epsilon$ sufficiently small.
\end{rmk}


\begin{proof}[Proof of Theorem \ref{thm: pseudo non eq uni}]
 From estimate \eqref{eq: doubling ineq} we obtain the bound \[ \hat{c}(\cl{QS}([pt]), \til{\phi}^2) \leq 2 c([pt], \til{\phi}).\] However, \[\hat{c}(\cl{QS}([pt]), \til{\phi}^2) = \hat{c}(h^{2n}[pt], \til{\phi}^2) = \hat{c}([pt], \til{\phi}^2).\] Identifying between the class $[pt]$ and its chain level representative, by choosing a Morse function on $M$ with unique minimum, which represents the point class, by Proposition \ref{prop: unique carriers} the following identities hold \[\hat{c}([pt], \til{\phi}^2) = \cA_{\til{\phi}^2}(PSS_{\zt}([pt])),\] \[ {c}([pt], \til{\phi}^2) = \cA_{\til{\phi}^2}(PSS([pt])).\] Furthermore, by Proposition \ref{prop: homological perturbation PSS} combined with Lemma \ref{lma: action of def 2}, \[\hat{c}([pt], \til{\phi}^2) \geq c([pt],\til{\phi}^2).\] This finishes the proof.  
\end{proof}


\bibliographystyle{abbrv}
\bibliography{bibliographyPRQS}

\end{document}